\documentclass[11pt,reqno]{amsart}

\usepackage{fullpage}


\usepackage[margin=1in]{geometry}
\setlength{\textwidth}{16cm} \setlength{\oddsidemargin}{0cm}
\setlength{\evensidemargin}{0cm}

\usepackage{amsmath}
\usepackage{amsfonts}
\usepackage{amssymb}
\usepackage{amsthm,mathtools,mathabx,accents,}
\usepackage{newlfont}
\usepackage{graphicx}
\usepackage{xcolor,enumerate,mathrsfs, scalerel}
\usepackage{esint}
\usepackage{hyperref}

\newtheorem{thm}{Theorem}[section]
\newtheorem{cor}[thm]{Corollary}
\newtheorem{lem}[thm]{Lemma}
\newtheorem{prop}[thm]{Proposition}

\theoremstyle{definition}
\newtheorem{defn}[thm]{Definition}
\newtheorem{rem}[thm]{Remark}
\numberwithin{equation}{section}


\renewcommand{\div}{\operatorname{div}}

\newcommand\de{\delta}

\newcommand\ve{\varepsilon}

\newcommand{\la}{\lambda}

\newcommand{\supp}{\operatorname{supp}}




\DeclarePairedDelimiter{\ceil}{\lceil}{\rceil}

\usepackage[usestackEOL]{stackengine}
\def\dint{\,\ThisStyle{\ensurestackMath{%
  \stackinset{c}{.2\LMpt}{c}{.5\LMpt}{\SavedStyle-}{\SavedStyle\phantom{\int}}}%
  \setbox0=\hbox{$\SavedStyle\int\,$}\kern-\wd0}\int}

\title{Non-unique solutions for electron MHD}

\begin{document}

\author [Mimi Dai]{Mimi Dai}

\address{Department of Mathematics, Statistics and Computer Science, University of Illinois at Chicago, Chicago, IL 60607, USA}
\email{mdai@uic.edu} 

\thanks{The author is partially supported by the NSF grants DMS--2009422 and DMS--2308208 and the Simons Foundation. }

\begin{abstract}
We consider the electron magnetohydrodynamics (MHD) equation on the 3D torus $\mathbb T^3$. For a given smooth vector field $H$ with zero mean and zero divergence, we can construct a weak solution $B$ to the electron MHD in the space $L^\gamma_tW^{1,p}_x$ for appropriate $(\gamma, p)$ such that $B$ is arbitrarily close to $H$ in this space. The parameters $\gamma$ and $p$ depend on the resistivity. As a consequence, non-uniqueness of weak solutions is obtained for the electron MHD with hyper-resistivity. In particular, non-Leray-Hopf solutions can be constructed. As a byproduct, we also show the existence of weak solutions to the electron MHD without resistivity.

\bigskip

KEY WORDS: electron magnetohydrodynamics; non-uniqueness; convex integration.

\hspace{0.02cm}CLASSIFICATION CODE: 35Q35, 76B03, 76D09, 76E25, 76W05.
\end{abstract}

\maketitle

\section{Introduction}

Consider the electron magnetohydrodynamics (MHD) equation with a general hyper-resistive term on $\mathbb T^3$
\begin{equation}\label{emhd}
B_t+\nabla\times((\nabla\times B)\times B)=-(- \Delta)^\alpha B
\end{equation}
for $\alpha\geq1$.
Note for initial data $B_0$ satisfying $\nabla\cdot B_0=0$, the solution of \eqref{emhd} preserves the Gauss law, i.e. $\nabla\cdot B=0$ for all the time. When $\alpha=1$, \eqref{emhd} is the physically relevant important model in plasma physics which describes the motion of magnetic field while the background ion flow motion is slow. The nonlinear term in \eqref{emhd} is deemed to capture the rapid magnetic reconnection phenomena due to the Hall effect. For more physics background regarding this model, we refer the reader to the book \cite{Bis1}. Our study of \eqref{emhd} for general $\alpha\geq 1$ stems from purely mathematical interests.

This paper concerns constructing weak solutions for the electron MHD. We start with the definition of a weak solution.

\begin{defn} Let $\mathcal D_T$ be the space of test functions $\varphi\in C^\infty(\mathbb T^3\times \mathbb R)$ satisfying $\div \varphi=0$ and $\varphi=0$ for $t\geq T$. Given $B_0\in L^2(\mathbb T^3)$ with $\div B_0=0$ in the weak sense, a vector field $B\in L^2(\mathbb T^3\times [0,T])$ is said to be a weak solution of (\ref{emhd}) with initial data $B_0$ if $B(t)$ is weakly divergence-free for a.e. $t\in[0,T]$, and 
\begin{equation}\label{weak}
\int_{\mathbb T^3} B_0(x)\cdot \varphi(x,0)\,dx=-\int_0^T\int_{\mathbb T^3}B\cdot \left(\partial_t\varphi-(-\Delta)^\alpha \varphi+B\cdot\nabla(\nabla\times \varphi) \right)dxdt
\end{equation}
for any $\varphi\in \mathcal D_T$.
\end{defn}
In the weak formulation \eqref{weak} we used the fact that 
\[\nabla\times((\nabla\times B)\times B)=\nabla\times \div (B\otimes B) \]
for divergence free vector field $B$.

Formally we have the energy identity for (\ref{emhd}) 
\begin{equation}\notag
\|B(t)\|_{L^2}^2+2\int_0^t \|(-\Delta)^{\frac\alpha2} B(s)\|_{L^2}^2ds= \|B(0)\|_{L^2}^2
\end{equation}
by noticing the cancellation 
\[\int_{\mathbb T^3} \nabla\times((\nabla\times B)\times B)\cdot B dx=\int_{\mathbb T^3} ((\nabla\times B)\times B)\cdot (\nabla\times B) dx=0\]
for a smooth vector field $B$. Thus we can adapt the notion of Leray-Hopf solution for (\ref{emhd}) as follows.

\begin{defn}
A weak solution $B$ of (\ref{emhd}) is called Leray-Hopf type of weak solution if $B\in C_w([0,T]; L^2(\mathbb T^3))\cap L^2(0,T; H^\alpha(\mathbb T^3))$ and the energy inequality 
\begin{equation}\notag
\|B(t)\|_{L^2}^2+2\int_0^t \|(-\Delta)^{\frac\alpha2} B(s)\|_{L^2}^2ds\leq \|B(0)\|_{L^2}^2
\end{equation}
holds for $t\in[0,T]$.
\end{defn}
In the physically relevant case $\alpha=1$, the Leray-Hopf space for \eqref{emhd} is 
\[C_w([0,T]; L^2(\mathbb T^3))\cap L^2(0,T; H^1(\mathbb T^3))\]
which is the same as for the Navier-Stokes equation.

The electron MHD has the natural scaling in the sense that if $B(x,t)$ is a solution with initial data $B_0(x)$, the rescaled magnetic field 
\[B_\lambda=\lambda^{2\alpha-2}B(\lambda x,\lambda^{2\alpha}t)\]
is also a solution associated with the rescaled initial data $B_0(\lambda x)$. Invariant functional spaces under such scaling include the Sobolev spaces $L_t^rW_x^{1,s}$ with $r,s$ satisfying 
\begin{equation}\notag
\frac{2\alpha}r+\frac3s= 2\alpha-1, \ \ \ 1\leq r, s\leq \infty. 
\end{equation}
The two ending point critical cases of $r=\infty$ and $s=\infty$ are respectively
\begin{equation}\label{critical-space}
L^\infty_t W_x^{1, \frac3{2\alpha-1}}, \ \ \ L^{\frac{2\alpha}{2\alpha-1}}_t W_x^{1, \infty}.
\end{equation}
It was shown in \cite{CL, Ye} that a solution to (\ref{emhd}) in the space $L_t^rW_x^{1,s}$ with the Ladyzhenskaya-Prodi-Serrin (LPS) type of condition
\begin{equation}\label{LPS}
\frac{2\alpha}r+\frac3s\leq 2\alpha-1, \ \ \mbox{for} \ \  s\in(3, \infty]
\end{equation}
is regular and hence unique. We note that compared to the LPS spaces $L_t^rW_x^{1,s}$ with $r,s$ satisfying \eqref{LPS}, the Leray-Hopf space $L_t^\infty L_x^2\cap L_t^2 H_x^\alpha$ has much lower regularity.

We shall adapt the convex integration method to construct weak solutions for the electron MHD. As a consequence, non-uniqueness of weak solutions is obtained in the hyper-resistive case.

\medskip

\subsection{Main results}
\label{sec-results}

\begin{thm}\label{thm-main}
Let $H$ be a smooth zero-mean vector field on $\mathbb T^3\times [0,T]$ with $\div H=0$. Then for any $\ve_*>0$, there exists a weak solution $B$ of (\ref{emhd}) with initial data $H(x,0)$ and spatial zero-mean such that 
\begin{itemize}
\item [(i)] in the case of $\alpha\in[1,2]$, 
\[B\in L^\gamma_t W^{1,\infty}_x, \ \ \gamma<\frac43, \ \ \mbox{and}\]
\[\|B-H\|_{L^1_tL^2_x}+\|B-H\|_{L^\gamma_tW^{1,\infty}_x}\leq \ve_*;\]
\item [(ii)] in the case of $\alpha\in[1,3)$, 
\[
B\in L^\infty_t W^{1,p}_x \ \ \ \mbox{for} \ \ 
\begin{cases}
p<\frac65, \ \ \alpha\in[1, \frac74), \\
p<\frac{3}{2\alpha-1}, \ \ \alpha\in[\frac74,3)
\end{cases}
\]
and
\[\|B-H\|_{L^1_tL^2_x}+\|B-H\|_{L^\infty_tW^{1,p}_x}\leq \ve_*.\]
\end{itemize}

\end{thm}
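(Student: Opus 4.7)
\medskip

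\textbf{Proof plan for Theorem \ref{thm-main}.} The plan is to run a convex integration scheme tailored to the electron MHD nonlinearity. First I would reformulate the equation at the relaxed level: search for pairs $(B_q,R_q)$ on $\mathbb T^3\times[0,T]$ with $B_q$ divergence-free and zero-mean, and
\[
\partial_t B_q+\nabla\times\bigl((\nabla\times B_q)\times B_q\bigr)+(-\Delta)^\alpha B_q=\nabla\times\nabla\cdot R_q,
\]
where $R_q$ is a symmetric traceless ``electron Reynolds stress''. The identity $\nabla\times((\nabla\times B)\times B)=\nabla\times\div(B\otimes B)$ from the weak formulation is what makes this relaxation natural. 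Start the iteration from $B_0=H$, cut off suitably, and compute the corresponding $R_0$ from the equation; because $H$ is smooth, $\|R_0\|_{L^1}$ is controlled by the $C^k$ norms of $H$.

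Next I would define inductive estimates along a geometric frequency sequence $\lambda_q=a^{b^q}$ and amplitudes $\delta_q=\lambda_q^{-2\beta}$ with small $\beta>0$, requiring $\|R_q\|_{L^1_{t,x}}\lesssim\delta_{q+1}$ and suitable bounds on $\|B_q\|_{C^1_{t,x}}$. The perturbation at stage $q+1$ is written as $w_{q+1}=w_{q+1}^{(p)}+w_{q+1}^{(c)}+w_{q+1}^{(t)}$, using \emph{intermittent} building blocks of Mikado/jet type concentrated on tubes of thickness $r_\perp^{-1}$, oscillating at frequency $\lambda_{q+1}$, and with a large-scale profile encoding $\sqrt{-R_q}$. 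The principal piece is chosen via a geometric lemma so that the quadratic self-interaction of $\nabla\times w_{q+1}^{(p)}$ with $w_{q+1}^{(p)}$ has low-frequency part equal to $-\nabla\times\div R_q$ modulo a pressure; the corrector $w_{q+1}^{(c)}$ restores $\div w_{q+1}=0$, and the temporal corrector $w_{q+1}^{(t)}$ absorbs $\partial_t w_{q+1}^{(p)}$ through an inverse-divergence trick.

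Then I would estimate the new stress $R_{q+1}$, produced by inverting $\nabla\times\nabla\cdot$ on:
\begin{enumerate}[(i)]
\item the high-frequency transport error from $(\partial_t+B_q\cdot\nabla)w_{q+1}$,
\item the Nash/oscillation error from interactions of $w_{q+1}$ with $B_q$,
\item the hyper-resistive error $(-\Delta)^\alpha w_{q+1}$,
\item the linear error from $R_q$ mollification.
\end{enumerate}
Using the standard $L^p$ improvement from intermittency, each piece can be made $\lesssim\delta_{q+2}$ provided the parameters $(\lambda_{q+1},r_\perp,\mu)$ are chosen to close a system of inequalities. The main obstacle is (iii): each derivative on $w_{q+1}$ costs $\lambda_{q+1}$, so $(-\Delta)^\alpha$ costs $\lambda_{q+1}^{2\alpha}$, and this must be compensated by intermittency that reduces the $L^p$ size. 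This is precisely where the two regimes in the statement split: tuning the concentration so that $w_{q+1}$ is tiny in $L^1_tL^2_x$ yet still enforces the geometric cancellation gives the cutoffs $\alpha<2$ with the $L^\gamma_tW^{1,\infty}$ regularity, and $\alpha<3$ with the $L^\infty_tW^{1,p}$ regularity; the breakpoint $\alpha=\tfrac74$ in the statement is where the trivial Sobolev exponent $p=\tfrac65$ stops being the binding constraint and is replaced by the scaling-critical $p=\tfrac{3}{2\alpha-1}$.

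Finally, I would sum the perturbations: $B=\lim_q B_q=H+\sum_{q\geq 0}w_{q+1}$ converges in $L^1_tL^2_x$ and in the claimed spaces, the stress goes to zero, so $B$ is a weak solution. Smallness of the perturbation follows from $\delta_q\to 0$ geometrically and a choice of $a$ large enough, yielding $\|B-H\|_{L^1_tL^2_x}+\|B-H\|_{X}\leq\varepsilon_*$ for the relevant space $X$. Initial data and zero-mean are preserved because the building blocks are supported away from $t=0$ via a time cutoff and are spatially zero-mean by construction.
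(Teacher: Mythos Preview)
Your outline captures the overall convex integration architecture, but it omits the mechanism that actually makes the parameter count close in this paper: \emph{temporal intermittency} in the sense of Cheskidov--Luo. In the paper the building blocks carry an additional temporal profile $g_{(\tau)}(t)$ with concentration parameter $\tau$ and oscillation $\sigma$, together with an antiderivative $h_{(\tau)}$ satisfying $\partial_t(\sigma^{-1}h_{(\tau)})=g_{(\tau)}^2-1$. This is not a decoration; it is the dominant source of smallness. In Case~(i) ($L^\gamma_t W^{1,\infty}_x$, $\alpha\in[1,2]$) the paper uses Mikado flows with essentially \emph{no} spatial concentration ($r=\lambda_{q+1}^{-3\ve}$, i.e.\ intermittency dimension $D\approx 3$), and the entire gain that puts the perturbation into $L^\gamma_t$ with $\gamma<\tfrac43$ comes from $\|g_{(\tau)}\|_{L^\gamma_t}\lesssim\tau^{1/2-1/\gamma}$. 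With only spatial concentration, as in your plan, you cannot land in $L^\gamma_t W^{1,\infty}_x$: the $L^\infty_x$ norm forbids any spatial gain, so without a temporal mechanism the perturbation is $O(1)$ there. In Case~(ii) the temporal concentration is again essential for $\alpha\ge\tfrac74$ (the paper takes $n_4=4\alpha-7+10\ve$), and the associated corrector $v_{q+1}^c=-\sigma^{-1}\sum_k\mathbb P_H\mathbb P_{\neq 0}(h_{(\tau)}\fint W_{(k)}\otimes W_{(k)}\,dx\,\nabla a_{(k)}^2)$ is what kills the low-mode piece $(g_{(\tau)}^2-1)\cdot(\text{mean of }W_{(k)}\otimes W_{(k)})$. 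Your description of the temporal corrector as something that ``absorbs $\partial_t w_{q+1}^{(p)}$'' corresponds only to the \emph{second} corrector $v_{q+1}^t$ used for the intermittent-jet case; it does not replace the $h_{(\tau)}$ corrector.

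Two smaller points. First, the paper does not relax at the level of $B$; it passes to the vector potential $A$ with $B=\nabla\times A$ and writes
\[
\partial_t A_q+\div(B_q\otimes B_q)+\nabla P_q=-(-\Delta)^\alpha A_q+\div \mathring R_q,
\]
so the error is recovered by the standard anti-divergence $\mathcal R$ rather than by inverting $\nabla\times\nabla\cdot$. This makes the linear error $\mathcal R\partial_t v_{q+1}^p$ gain a full $\lambda_{q+1}^{-2}$ (one from $\mathcal R$ and one from $v\sim\lambda^{-1}w$), which is important in the heuristic balance. Second, the cancellation is $w_{q+1}^p\otimes w_{q+1}^p+R_q\approx\rho\,\mathrm{Id}$ via the geometric lemma, not an interaction of $\nabla\times w_{q+1}^{(p)}$ with $w_{q+1}^{(p)}$; and there is no transport operator $(\partial_t+B_q\cdot\nabla)$ in this problem --- the linear interaction error is $\div(B_q\otimes w_{q+1}+w_{q+1}\otimes B_q)$, handled simply by $\|B_q\|_{L^\infty}\|w_{q+1}\|_{L^1_tL^\eta_x}$.
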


Non-uniqueness of weak solutions can be derived from this theorem in the case $\alpha\geq \frac74$, which corresponds to the critical and subcritical regime of the equation.

\begin{cor}\label{cor-1}
Let $\alpha\in[\frac74,2]$ and $\gamma<\frac43$. The following statements hold:
\begin{itemize}
\item [(i)]
For any weak solution $\widetilde B$ of (\ref{emhd}), there exists another weak solution $B\in L^\gamma_t W^{1,\infty}_x$ of (\ref{emhd}) with the same initial data. 
\item [(ii)]
There exist a weak solution $B\in L^\gamma_t W^{1,\infty}_x$ which is not a Leray-Hopf solution.
\item [(iii)]
For every divergence-free initial data in $L^2_{x}$, there are infinitely many weak solutions in $B\in L^\gamma_t W^{1,\infty}_x$.
\end{itemize}
\end{cor}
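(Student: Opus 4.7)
My plan is to deduce Corollary~\ref{cor-1} from Theorem~\ref{thm-main}(i) by tailoring the smooth target field $H$ and exploiting that the error $\ve_*$ is at our disposal.

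I would start with Part (ii), which is the most direct. Pick any smooth, zero-mean, divergence-free $H$ on $\mathbb T^3 \times [0,T]$ with $H(\cdot,0) \equiv 0$ and $\|H\|_{L^1_t L^2_x} \geq 2\ve_*$. Theorem~\ref{thm-main}(i) yields a weak solution $B \in L^\gamma_t W^{1,\infty}_x$ with $B(\cdot,0) = 0$ and $\|B-H\|_{L^1_t L^2_x} \leq \ve_*$, so by the triangle inequality $\|B\|_{L^1_t L^2_x} \geq \ve_* > 0$. Thus $B \not\equiv 0$. Were $B$ a Leray--Hopf solution, the energy inequality with vanishing initial data would force $B \equiv 0$, a contradiction.

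For Part (i), given $\widetilde B$ with initial datum $\widetilde B_0$, the plan is to build a competing solution $B$ with the same initial data. If $\widetilde B_0$ is smooth, I would select a smooth time-extension $H$ with $H(\cdot,0) = \widetilde B_0$ chosen so that $H$ is well separated from $\widetilde B$ in $L^1_t L^2_x$ (for instance, with a noticeably different time-energy profile), then apply Theorem~\ref{thm-main}(i) with $\ve_* \ll \|H-\widetilde B\|_{L^1_t L^2_x}$. The triangle inequality then forces $B \neq \widetilde B$. For general $\widetilde B_0 \in L^2$, one must additionally handle rough initial data; the natural approach is a gluing step on $[0,\delta]$, matching $\widetilde B$ on that interval with a convex-integration-built solution on $[\delta,T]$ whose trace at $t=\delta$ equals a mollification of $\widetilde B(\cdot,\delta)$, and then sending $\delta \to 0$.

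Part (iii) combines an existence result with a parametrized version of (i). For $\alpha \geq 7/4$, a Leray--Hopf weak solution exists for any divergence-free $u_0 \in L^2$ by standard Galerkin/compactness arguments. Then select a sequence $\{H_k\}$ of smooth zero-mean divergence-free fields, all with trace $u_0$ at $t=0$ but satisfying $\|H_k - H_j\|_{L^1_t L^2_x} \geq c_0 > 0$ for $k \neq j$; applying Theorem~\ref{thm-main}(i) to each with a fixed $\ve_* < c_0/3$ produces solutions $B_k$ pairwise $c_0/3$-separated in $L^1_t L^2_x$, hence distinct. The main technical obstacle common to (i) and (iii) is prescribing rough $L^2$ initial data exactly, since Theorem~\ref{thm-main} requires a smooth $H$; a careful gluing argument near $t=0$, or a refinement of the convex integration scheme accommodating rough initial data, is what I expect to be the hardest ingredient.
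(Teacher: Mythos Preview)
Your approach to (ii) is correct and in fact cleaner than the paper's (which extracts it as a byproduct of (i)). The genuine gap is in (i) and (iii), precisely where you flag it: handling rough $L^2$ initial data. Your proposed fix---glue $\widetilde B$ on $[0,\delta]$ to a convex-integration solution on $[\delta,T]$ whose trace at $t=\delta$ is a \emph{mollification} of $\widetilde B(\cdot,\delta)$, then send $\delta\to 0$---does not work as written. A weak solution in $L^2_{t,x}$ need not have a well-defined trace at $t=\delta$; even if it does, mollifying changes the value, so the glued object has a jump at $t=\delta$ and fails to be a weak solution; and nothing is offered to justify that a $\delta\to 0$ limit exists, is a weak solution, and differs from $\widetilde B$.

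The paper's resolution uses the hypothesis $\alpha\in[\tfrac74,2]$ in an essential way that you do not exploit: the equation is critical or subcritical there, so Leray--Hopf solutions are smooth on $(0,T]$. For (i) this yields a two-case argument. If $\widetilde B$ is not Leray--Hopf, simply take $B$ to be the Leray--Hopf solution with the same data; it is smooth (hence in $L^\gamma_t W^{1,\infty}_x$) and different from $\widetilde B$. If $\widetilde B$ is Leray--Hopf, it is already smooth for $t>0$; one then applies Theorem~\ref{thm-main} on the sub-interval $[\tfrac{T}{2},T]$ with a smooth $H$ satisfying $H\equiv\widetilde B$ on $[\tfrac{T}{2},\tfrac{3T}{4}]$ but $\|H-\widetilde B\|_{L^\gamma(\frac{T}{2},T;W^{1,\infty})}\geq 1$, and glues the resulting $\bar B$ with $\widetilde B$ at $t=\tfrac{T}{2}$. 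No mollification is needed because $\widetilde B(\cdot,\tfrac{T}{2})$ is already smooth, and the temporal support property \eqref{iter-supp} forces $\bar B=H=\widetilde B$ on a right neighborhood of $\tfrac{T}{2}$, so the glued field is a bona fide weak solution with initial data $\widetilde B(\cdot,0)$. The same device---start from a Leray--Hopf solution with the given $L^2$ data, then branch off at a positive time---handles (iii).
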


\begin{cor}\label{cor-2}
Let $\alpha\in[\frac74, 3)$ and $p<\frac{3}{2\alpha-1}$. The following statements hold:
\begin{itemize}
\item [(i)]
For any weak solution $\widetilde B$ of (\ref{emhd}), there exists another weak solution $B\in L^\infty_t W^{1,p}_x$ of (\ref{emhd}) with the same initial data.
\item [(ii)]
There exist a weak solution $B\in L^\gamma_t W^{1,\infty}_x$ which is not a Leray-Hopf solution.
\item [(iii)] 
For every divergence-free initial data in $L^2_{x}$, there are infinitely many weak solutions in $B\in L^\infty_t W^{1,p}_x$.
\end{itemize}
\end{cor}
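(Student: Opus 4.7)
All three items are derived from Theorem~\ref{thm-main}(ii) by tuning the background field $H$; the key mechanism is that distinct smooth $H$'s, by the $L^1_tL^2_x$-proximity estimate furnished by the theorem, yield distinct solutions $B$ with prescribed initial data.

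\textbf{Part (i).} Given $\widetilde B$ with initial data $\widetilde B_0$, I would choose a smooth, zero-mean, divergence-free $H$ on $\mathbb T^3\times[0,T]$ with $H(\cdot,0)=\widetilde B_0$ (when $\widetilde B_0$ is smooth; otherwise one first glues $\widetilde B$ with a convex-integration construction on a sub-interval $[\tau,T]$, using that the hyperresistive flow with $\alpha\ge\tfrac74$ instantaneously smoothes the ambient solution) and $\|H-\widetilde B\|_{L^1_tL^2_x}>3\ve_*$. Applying Theorem~\ref{thm-main}(ii) yields $B\in L^\infty_tW^{1,p}_x$ with $B(\cdot,0)=\widetilde B_0$ and $\|B-H\|_{L^1_tL^2_x}\le\ve_*$, hence $\|B-\widetilde B\|_{L^1_tL^2_x}>2\ve_*>0$ by the triangle inequality, so $B\neq\widetilde B$.

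\textbf{Part (ii).} Take $H$ smooth, divergence-free and zero-mean, with $H(\cdot,0)\equiv 0$ but $\|H(t_0)\|_{L^2}\ge c_0>0$ at some $t_0\in(0,T]$. With $\ve_*\ll c_0$, Theorem~\ref{thm-main}(ii) produces $B$ with vanishing initial data and $\|B-H\|_{L^1_tL^2_x}\le\ve_*$, so $B\not\equiv 0$. A Leray-Hopf solution with vanishing initial data satisfies $\|B(t)\|_{L^2}^2\le 0$ by the energy inequality and must therefore be identically zero, so $B$ is not Leray-Hopf.

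\textbf{Part (iii).} Fix a smooth, divergence-free, zero-mean bump $H^*$ that vanishes at $t=0$ with $\|H^*\|_{L^1_tL^2_x}=1$, and let $\eta$ be a smooth extension of $B_0$ to $\mathbb T^3\times[0,T]$ (obtained via the gluing of (i) when $B_0\in L^2$ is rough). Set $H_k:=\eta+kH^*$, so that $\|H_k-H_j\|_{L^1_tL^2_x}=|k-j|$ and $H_k(\cdot,0)=B_0$ for every $k$. Choosing $\ve_*<\tfrac14$ once and for all, Theorem~\ref{thm-main}(ii) supplies weak solutions $B_k\in L^\infty_tW^{1,p}_x$ with initial data $B_0$ and $\|B_k-H_k\|_{L^1_tL^2_x}\le\ve_*$; then $\|B_k-B_j\|_{L^1_tL^2_x}\ge|k-j|-2\ve_*>0$ for $k\neq j$, producing infinitely many distinct solutions.

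\textbf{Main obstacle.} The principal technical issue is matching the smooth background $H$ demanded by Theorem~\ref{thm-main} to an arbitrary $L^2$ initial datum in parts (i) and (iii). The cleanest route is a gluing argument: one uses a known (e.g.\ Leray-Hopf) solution on an initial sliver $[0,\tau]$, exploits the regularization of $(-\Delta)^\alpha$ with $\alpha\ge\tfrac74$ to obtain a smooth trace at $t=\tau$, and applies convex integration on $[\tau,T]$ with that smooth trace as new initial data. Verifying that the concatenation solves \eqref{emhd} weakly across $t=\tau$ and that the full field lies in $L^\infty_tW^{1,p}_x$ (in particular that the sliver piece has the right integrability) is the delicate bookkeeping step.
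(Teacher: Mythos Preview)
Your proposal follows the paper's own strategy almost exactly: the paper's proof of Corollary~\ref{cor-2} is declared to be a ``minor modification'' of the proof of Corollary~\ref{cor-1}, and that proof is precisely the gluing mechanism you describe---run Theorem~\ref{thm-main} against carefully chosen smooth profiles $H$ and separate the resulting solutions from $\widetilde B$ (resp.\ from each other) by a triangle-inequality argument, handling rough $L^2$ initial data by first evolving a Leray--Hopf solution and applying convex integration only after it has smoothed. Your part~(ii), using zero initial data, is a clean variant of the paper's argument (which instead extracts a non-Leray--Hopf solution directly from the construction in~(i)); your use of the $L^1_tL^2_x$ norm rather than the $L^\infty_tW^{1,p}_x$ norm to distinguish solutions is equally valid, since Theorem~\ref{thm-main} controls both.

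There is one genuine slip in your part~(i). You write that when $\widetilde B_0$ is not smooth one ``glues $\widetilde B$ with a convex-integration construction on $[\tau,T]$, using that the hyperresistive flow \dots\ instantaneously smoothes the ambient solution.'' But $\widetilde B$ is an \emph{arbitrary} weak solution, and such solutions need not regularize---the instant-smoothing property is only available for Leray--Hopf solutions in the (sub)critical regime $\alpha\ge\tfrac74$. The paper resolves this by a case split: if $\widetilde B$ is not Leray--Hopf, the Leray--Hopf solution with the same data already furnishes a distinct solution in $L^\infty_tW^{1,p}_x$; if $\widetilde B$ \emph{is} Leray--Hopf then it does smooth on $(0,T]$, and one glues with the convex-integration solution on a later subinterval exactly as you describe. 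Your ``Main obstacle'' paragraph in fact lands on the correct fix (use a Leray--Hopf sliver on $[0,\tau]$), so the idea is there---just be sure the gluing in~(i) is done with the Leray--Hopf solution, not with the given $\widetilde B$.
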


\begin{rem}
The result of Corollary \ref{cor-1} is sharp when $\alpha=2$, since the critical scaling $\frac{2\alpha}{2\alpha-1}=\frac43$ and the weak solution $B$ belongs to $L^\gamma_t W^{1,\infty}_x$ for $\gamma<\frac43$. The result of Corollary \ref{cor-2} is sharp for all $\alpha\in[\frac74, 3)$ in view of \eqref{critical-space}.
\end{rem}

\begin{rem}
Following the techniques of \cite{BCV, ChL}, the solutions in Theorem \ref{thm-main}, Corollary \ref{cor-1} and Corollary \ref{cor-2} can be made smooth almost everywhere in time. As done in \cite{Gor, LQZZ}, one can also obtain non-uniqueness for the electron MHD \eqref{emhd} in spaces $L^\gamma_tW^{1,p}_x$ for a larger range of $\gamma, p$ than what is covered in Theorem \ref{thm-main}. 
We do not pursue in these directions in the current paper.
\end{rem}

The electron MHD \eqref{emhd} without the term $(-\Delta)^\alpha B$ is referred the ideal or non-resistive electron MHD. Since the Hall term in \eqref{emhd} is more singular than the nonlinear term of the Euler equation, it was known to be challenging to show existence of weak solutions for the ideal electron MHD in general. Nevertheless, our construction of weak solutions through the convex integration approach does not depend on the resistivity. Therefore we are able to construct weak solutions for the ideal electron MHD.

\begin{thm}\label{thm-2}
Consider the non-resistive electron MHD,
\begin{equation}\notag
\begin{split}
B_t+\nabla\times((\nabla\times B)\times B)=0,\\
\nabla\cdot B=0.
\end{split}
\end{equation}
Let $H$ be a smooth zero-mean vector field on $\mathbb T^3\times [0,T]$ with $\div H=0$. Then for any $\ve_*>0$, there exists a weak solution $B$ of the non-resistive electron MHD with initial data $H(x,0)$ and spatial zero-mean such that either one of the following statements holds:
\begin{itemize}
\item [(i)] 
\[B\in L^\gamma_t W^{1,\infty}_x, \ \ \gamma<\frac43, \ \ \mbox{and}\]
\[\|B-H\|_{L^1_tL^2_x}+\|B-H\|_{L^\gamma_tW^{1,\infty}_x}\leq \ve_*;\]
\item [(ii)] 
\[
B\in L^\infty_t W^{1,p}_x \ \ \ p<\frac65, \ \ \mbox{and}
\]
\[\|B-H\|_{L^1_tL^2_x}+\|B-H\|_{L^\infty_tW^{1,p}_x}\leq \ve_*.\]
\end{itemize}
\end{thm}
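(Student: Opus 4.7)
The plan is to recycle the convex integration scheme that underlies Theorem \ref{thm-main}, exploiting the observation that the hyper-resistive term $-(-\Delta)^\al B$ enters the inductive construction only through a linear error that must be absorbed into the next generation Reynolds stress. Deleting this term from the equation does not break the scheme; it simply removes one contribution from the error budget. Thus the construction for Theorem \ref{thm-2} is a strict simplification of the one proving Theorem \ref{thm-main}.

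Concretely, I would build a sequence $(B_q, R_q)_{q\geq 0}$ of smooth, divergence-free, zero-mean fields solving the relaxed system
\[\pa_t B_q + \na \times \div(B_q \otimes B_q) = \na \times \div R_q,\]
with $B_0$ a suitable mollification of $H$ adjusted so that $B_0(\cdot,0)=H(\cdot,0)$, and with $\|R_q\|_{L^1_{t,x}} \to 0$. The perturbation $b_{q+1} = B_{q+1} - B_q$ is assembled from the same high-frequency Mikado-type building blocks adapted to the curl-curl structure of the Hall nonlinearity, together with the temporal corrector used to cancel the low-frequency part of $b_{q+1}\otimes b_{q+1}$ modulo $R_q$. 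Choosing the amplitude/frequency parameters $\la_q,\de_q$ identically to those in the proof of Theorem \ref{thm-main} produces the two alternatives (i) and (ii): the pair $\ga<4/3$, $p=\infty$ is obtained by the $L^\ga_tW^{1,\infty}_x$ optimisation, while the pair $\ga=\infty$, $p<6/5$ is obtained by the $L^\infty_tW^{1,p}_x$ optimisation. The new stress $R_{q+1}$ decomposes into an oscillation error, a transport error, a Nash error and a linear error from $\pa_t b_{q+1}$; each is estimated exactly as in the resistive case, and the term $\cR((-\Delta)^\al b_{q+1})$ simply disappears. The bound $\|B-H\|_{L^1_tL^2_x}+\|B-H\|_{\star}\leq\ve_*$ in the respective norm $\star$ then follows by the telescoping sum over the perturbations, after ensuring that $B_0$ sits within $\ve_*/2$ of $H$ in the target norm (possible since $H$ is smooth).

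The main obstacle is organisational rather than analytical: all the estimates closing the iteration in Theorem \ref{thm-main} rest on frequency-amplitude balances and Sobolev interpolation and do not use coercivity of the resistive term. The only non-trivial check is that the parameters $\la_q,\de_q$ still produce a summable telescope without the parabolic smoothing of $(-\Delta)^\al$; this is true precisely because the resistive contribution was, in the resistive proof, the term that forced the extra restriction $\al<7/4$ in case (ii) of Theorem \ref{thm-main}. Removing it makes the unrestricted range $p<6/5$ available for every $\al$, which explains why alternative (ii) above is stated only at the threshold $6/5$ and not with the sharper range $p<3/(2\al-1)$ that appears in the resistive statement.
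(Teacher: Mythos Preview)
Your proposal is correct and follows essentially the same approach as the paper: the paper's proof of Theorem~\ref{thm-2} literally consists of invoking the construction behind Theorem~\ref{thm-main} and observing (via Remarks~\ref{rem-case1} and~\ref{rem-case2}) that deleting the resistive term merely removes the parameter constraints coming from the linear error $\mathcal R(-\Delta)^\alpha v_{q+1}$, leaving the rest of the iteration intact and yielding $\gamma<4/3$ in case (i) and $p<6/5$ in case (ii). The only cosmetic differences are that the paper runs the relaxed system at the level of the vector potential $A_q$ (with $B_q=\nabla\times A_q$) rather than for $B_q$ directly, and that $B_0$ is taken equal to $H$ without mollification since $H$ is already smooth.
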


\medskip

\subsection{Relevant previous work}
The unique solvability for supercritical equations is a challenging problem in general. The convex integration approach has been proven a robust machinery to construct non-unique weak solutions. In the case of pure hydrodynamics, Buckmaster and Vicol \cite{BV} first showed non-unique weak solutions for the Navier-Stokes equation (NSE) in $C^0_tH_x^\beta$ for $0<\beta\ll 1$. The regularity of the solutions has a certain gap with the Leray-Hopf space and the Ladyzhenskaya-Prodi-Serrin space for the NSE. Taking advantage of the temporal intermittency, Cheskidov and Luo \cite{ChL2, ChL} were able to construct weak solutions for the NSE in spaces that touch the LPS borderline. Inspired by the ideas of \cite{ChL}, Giri and Radu \cite{GR} designed a convex integration scheme involving two steps of (temporal and spatial) perturbations and resolved the Onsager conjecture for the 2D Euler equation. Li, Qu, Zeng and Zheng \cite{LQZZ} extended the techniques of \cite{ChL2, ChL} to hyper-dissipative NSE where non-uniqueness was obtained in a class of Sobolev spaces. The result of \cite{LQZZ} was further optimized in \cite{Gor} by Gorini. Non-uniqueness in spaces near the LPS line was obtained for the classical MHD by Li, Zeng and Zheng \cite{LZZ}. For the full MHD with Hall effect, the author \cite{Dai} constructed non-unique weak solutions in $C^0_tL^2_x\cap L^2_tH^1_x$ (the space of Leray-Hopf), although the solutions are not known to satisfy the energy inequality. 

\medskip

The rest of the paper is devoted to proving the main results stated in Subsection \ref{sec-results}.

\bigskip

\section{The main inductive proposition and heuristic analysis}

\subsection{The relaxed system}

Let $A$ be the zero-mean magnetic vector potential satisfying $B=\nabla\times A$ and the Coulomb gauge condition $\nabla\cdot A=0$.  
Note that 
\[\nabla\times((\nabla\times B)\times B)=\nabla\times
\nabla\cdot (B\otimes B).\]
The electron MHD equation (\ref{emhd}) can thus be recasted in term of $A$ as
\begin{equation}\label{emhd-a}
\begin{split}
A_t+\nabla\cdot (B\otimes B)+\nabla P=& -(-\Delta)^\alpha A,\\
B=&\ \nabla\times A,\\
\nabla\cdot A=&\ 0
\end{split}
\end{equation}
for a pressure function $P$. 

We consider the relaxed system of (\ref{emhd-a}) in the form
\begin{equation}\label{emhd-q}
\begin{split}
\partial_t A+\nabla\cdot(B\otimes B)+\nabla P=& -(-\Delta)^\alpha A+ \nabla\cdot \mathring R,\\
B=&\ \nabla\times A,\\
\nabla\cdot A=&\ 0
\end{split}
\end{equation}
where $\mathring R$ is a symmetric traceless tensor.
Exploiting the convex integration scheme, we construct solutions iteratively to the approximating systems
\begin{equation}\label{emhd-q}
\begin{split}
\partial_t A_q+\nabla\cdot (B_q\otimes B_q)+\nabla P_q=& -(-\Delta)^\alpha A_q+ \nabla\cdot \mathring R_q,\\
B_q=&\ \nabla\times A_q,\\
\nabla\cdot A_q=&\ 0
\end{split}
\end{equation}
for $q\in \mathcal N$. 

\subsection{Iteration statement}
For large constants $a,b>0$, define the spatial frequency at $q$-th level as
\begin{equation}\notag
\la_q = 2\pi \ceil{a^{(b^q)}}, \ \ q\geq 0
\end{equation}
and the amplitude parameter as
\begin{equation}\label{def.deq}
    \de_q = \la_q^{-2\beta}, \ \ q\geq 2
\end{equation}
where $\beta>0$ (rather small) quantifies the regularity of the constructed solutions. The amplitude $\delta_0$ and $\delta_1$ will be chosen appropriately in the construction later on.
The large constants $a$ and $b$ and small $\beta$ are chosen such that $0<b^2\beta<\frac{1}{1000}$. By convention, we adapt the symbol $\lesssim$ to denote an estimate of $\leq$ up to a constant multiple in the rest of the paper.

We shall construct solutions $(A_q,B_q,\mathring R_q)$ of \eqref{emhd-q} iteratively with frequency support near $\lambda_q$ that satisfy the following inductive assumptions, 
\begin{equation}\label{induct-A}
\|A_q\|_{L^\infty_tH^3_x}\lesssim \lambda_q^5,
\end{equation}
\begin{equation}\label{induct-B}
\|B_q\|_{L^\infty_tH^2_x}\lesssim \lambda_q^5,
\end{equation}
\begin{equation}\label{induct-R}
\|R_q\|_{L^1_{t,x}}\leq \delta_{q+1}.
\end{equation}
For a time interval $I\subset [0,T]$ and a constant $\epsilon>0$, denote by $N_\epsilon(I)$ the $\epsilon$-neighborhood of $I$ as
\[N_\epsilon(I)=\{t\in[0,T]: |t-s|\leq \epsilon \ \ \mbox{for some} \ \ s\in I \}.\]

\begin{prop}  [Main iteration] \label{prop}
Let 
\begin{equation}\notag
L^\gamma_t W^{1,p}_x=
\begin{cases}
L^\gamma_t W^{1,\infty}_x, \ \ \ \gamma<\frac43, \ \ \mbox{for} \ \ \alpha\in[1,2],\\
L^\infty_t W^{1,p}_x, \ \ \ p<\frac65, \ \ \mbox{for} \ \ \alpha\in[1, \frac74),\\
L^\infty_t W^{1,p}_x, \ \ \ p<\frac{3}{2\alpha-1}, \ \ \mbox{for} \ \ \alpha\in[\frac74, 3). 
\end{cases}
\end{equation}
There exist large enough constants $a, b>0$, sufficiently small $\beta>0$ and a large constant $M>0$ such that the following statement holds:
Assume $(A_q,B_q, \mathring R_q)$ is a solution of (\ref{emhd-q}) that satisfies (\ref{induct-A})-(\ref{induct-R}). There exists another solution $(A_{q+1},B_{q+1}, \mathring R_{q+1})$ of (\ref{emhd-q}) satisfying (\ref{induct-A})-(\ref{induct-R}) with $q$ replaced by $q+1$. Moreover, we have
\begin{equation}\label{iter-1}
\|B_{q+1}-B_q\|_{L^2_{t,x}}\leq M\delta_{q+1}^{\frac12},
\end{equation}
\begin{equation}\label{iter-2}
\|B_{q+1}-B_q\|_{L^1_t L^2_x}\leq \delta_{q+2}^{\frac12},
\end{equation}
\begin{equation}\label{iter-3}
\|B_{q+1}-B_q\|_{L^\gamma_t W^{1,p}_x}\leq \delta_{q+2}^{\frac12},
\end{equation}
\begin{equation}\label{iter-supp}
\supp_t (A_{q+1},B_{q+1}, \mathring R_{q+1})\subset N_{\delta_{q+2}^{\frac12}}(\supp_t (A_{q},B_{q}, \mathring R_{q})).
\end{equation}
\end{prop}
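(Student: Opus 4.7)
The proof follows the convex integration architecture adapted to the vector potential formulation \eqref{emhd-q}. I would produce $A_{q+1}=A_\ell+w_{q+1}$, where $A_\ell$ is a space-time mollification of $A_q$ at a small length scale $\ell$ (a small negative power of $\la_{q+1}$) and $w_{q+1}$ is a highly oscillatory, mean-zero, divergence-free perturbation concentrated at spatial frequency $\sim\la_{q+1}$. Following the standard template, I split $w_{q+1}=w^{(p)}+w^{(c)}+w^{(t)}$ into a principal part whose induced magnetic self-interaction $(\nabla\times w^{(p)})\otimes(\nabla\times w^{(p)})$ cancels $\mathring R_\ell$ to leading order, an incompressibility corrector restoring $\nabla\cdot A_{q+1}=0$, and a temporal corrector absorbing the fastest local time derivative. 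A new stress $\mathring R_{q+1}$ is then obtained by inverting the divergence on the remaining error terms, and all inductive bounds are verified.

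The choice of intermittent building blocks depends on the regime in Proposition \ref{prop}. For $\alpha\in[1,2]$, where \eqref{iter-3} is set in $L^\gamma_tW^{1,\infty}_x$ with $\gamma<4/3$, I use temporally intermittent Mikado flows in the spirit of \cite{ChL,ChL2,LQZZ}: tubular spatial profiles at frequency $\la_{q+1}$ with transverse concentration $r_\perp$, modulated by a fast temporal profile of frequency $\mu\sim\la_{q+1}^{2\alpha}$ and temporal concentration $\sigma$. For $\alpha\in[1,7/4)$ the $L^\infty_tW^{1,p}_x$ result with $p<6/5$ follows from purely spatially intermittent blocks, and for $\alpha\in[7/4,3)$ the same construction with saturated intermittency parameters reaches the sharp $p<3/(2\alpha-1)$. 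In each case the principal perturbation takes the form
\[
w^{(p)}=\sum_{\xi\in\La}(\nabla\times)^{-1}\bigl(a_\xi(x,t)\,W_\xi(x,t)\bigr),
\]
where the $W_\xi$ are the building blocks, $\La\subset S^2$ is a finite geometric family, and the amplitudes $a_\xi$ are read off from $\mathring R_\ell$ via the standard geometric lemma applied to the quadratic form produced by $B\otimes B$ at leading order. The nonlocal $(\nabla\times)^{-1}$ is well defined on mean-zero divergence-free fields, and is the reason the construction must live at the level of $A$ rather than $B$.

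The new stress is $\mathring R_{q+1}=\cR(E_{\rm lin}+E_{\rm Nash}+E_{\rm osc}+E_{\rm moll})$, where $E_{\rm lin}=(-\Delta)^\alpha w_{q+1}+\partial_t(w^{(p)}+w^{(c)})$, $E_{\rm Nash}=\nabla\cdot(B_\ell\otimes b+b\otimes B_\ell)$ with $b=\nabla\times w_{q+1}$, $E_{\rm osc}$ is the high-low-frequency remainder of $\nabla\cdot(b\otimes b+\mathring R_\ell)-\partial_t w^{(t)}$ after the defining cancellation, and $E_{\rm moll}$ collects commutator terms from mollification. The inductive bounds \eqref{induct-A}--\eqref{iter-supp} then follow from $L^p$-decoupling estimates for intermittent Mikados, improved H\"older bounds exploiting fast oscillations, and a careful selection of $(\ell,r_\perp,\mu,\sigma)$ as powers of $\la_{q+1}$, subject to $0<b^2\beta<1/1000$. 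The main obstacle, and the reason the thresholds in Proposition \ref{prop} differ from those in the Navier-Stokes and standard MHD settings, is that the Hall nonlinearity costs two additional derivatives on the oscillatory blocks: the quadratic interaction $(\nabla\times w^{(p)})\otimes(\nabla\times w^{(p)})$ produces an extra factor of $\la_{q+1}^2$ compared with $w^{(p)}\otimes w^{(p)}$ in Euler/NSE, while the dissipative error $(-\Delta)^\alpha w_{q+1}$ scales as $\la_{q+1}^{2\alpha-2}\delta_{q+1}^{1/2}$. Balancing these competing losses against \eqref{iter-2}--\eqref{iter-3} leaves very little slack; it is what makes temporal intermittency necessary to reach $\gamma<4/3$ when $\alpha\in[1,2]$, what produces the crossover at $\alpha=7/4$, and what forces the parameter constraint $b^2\beta<1/1000$. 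Carrying out this balance across all three regimes is the technical core of the argument.
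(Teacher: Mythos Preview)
Your overall architecture is correct --- convex integration at the level of the vector potential, with intermittent building blocks and temporal correctors --- but several choices diverge from the paper and one diagnostic claim is off.

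\textbf{Differences from the paper.} The paper does \emph{not} mollify: it works directly with $R_q$, defining $\rho=2\chi(R_q)$ via a smooth cutoff $\chi$ so that $\mathrm{Id}-R_q/\rho\in B_{1/2}(\mathrm{Id})$ pointwise, and reads the amplitudes $a_{(k)}$ from this. There is therefore no $E_{\rm moll}$. More importantly, the paper avoids your nonlocal $(\nabla\times)^{-1}$ by building an explicit local double potential: for each block one has $W_{(k)}=\nabla\times\nabla\times W_{(k)}^c$, and the principal potential perturbation is $v_{q+1}^p=\sum_k\nabla\times(a_{(k)}g_{(\tau)}W_{(k)}^c)$, automatically divergence-free. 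Taking one more curl yields $w_{q+1}^p+w_{q+1}^c$ with $w_{q+1}^p=\sum_k a_{(k)}g_{(\tau)}W_{(k)}$ the principal \emph{magnetic} perturbation. A structural point you miss: the first temporal corrector $v_{q+1}^c$ (built from $h_{(\tau)}$) is designed to have $\nabla\times v_{q+1}^c=0$, so it corrects the $A$-equation without perturbing $B$ at all. For the $L^\infty_tW^{1,p}_x$ regime the paper does \emph{not} use ``purely spatially intermittent'' Mikados; it switches to intermittent jets (transverse and parallel concentration plus built-in drift $\mu$), for which $\div(W_{(k)}\otimes W_{(k)})\neq 0$, and this forces a \emph{second} temporal corrector $v_{q+1}^t$. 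Your description of Case~II is too vague to see whether you would reach $p<3/(2\alpha-1)$ for $\alpha\geq 7/4$: without temporal intermittency the dissipation constraint $-\tfrac12 n+2\alpha-2-\tfrac32\leq 0$ fails there.

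\textbf{A misdiagnosis.} Your claim that ``the quadratic interaction $(\nabla\times w^{(p)})\otimes(\nabla\times w^{(p)})$ produces an extra factor of $\lambda_{q+1}^2$'' misidentifies the mechanism. The cancellation is arranged at the level of $B$: one forces $w_{q+1}^p\otimes w_{q+1}^p\approx -R_q$ with $w_{q+1}^p$ the magnetic perturbation, so $\|w_{q+1}^p\|_{L^2}\sim\delta_{q+1}^{1/2}$ exactly as in NSE, and there is no extra $\lambda_{q+1}^2$ in the oscillation error. What changes relative to NSE is that (i) the target norm is $W^{1,p}$, costing one $\lambda_{q+1}$ in \eqref{iter-3}; and (ii) the linear errors live in the $A$-equation, so $\mathcal R\partial_t v_{q+1}^p$ and $\mathcal R(-\Delta)^\alpha v_{q+1}^p$ carry an extra $\lambda_{q+1}^{-1}$ from $v\sim\lambda_{q+1}^{-1}w$, giving the scalings $\lambda_{q+1}^{-2}\tau^{1/2}$ and $\lambda_{q+1}^{2\alpha-2}\tau^{-1/2}$ (in $L^1$) of Section~\ref{sec-heuristics}. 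Balancing these, not a loss in the quadratic term, is what produces $\gamma<4/3$, the crossover at $\alpha=7/4$, and the sharp $p<3/(2\alpha-1)$. Your parameter guess $\mu\sim\lambda_{q+1}^{2\alpha}$ for the temporal frequency in Case~I is also off; the paper takes the temporal concentration $\tau\sim\lambda_{q+1}^{4}$ independently of $\alpha$ (it is the ceiling set by the time-derivative error, not the dissipation, that determines it).
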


\subsection{Heuristics}
\label{sec-heuristics}
Denote by $v_{q+1}$ and $w_{q+1}$ the perturbations for $A_q$ and $B_q$ respectively, i.e.
\[A_{q+1}=A_q+v_{q+1}, \ \ \ B_{q+1}=B_q+w_{q+1}.\]
The triplet $(A_{q+1}, B_{q+1}, R_{q+1})$ with a new stress error $R_{q+1}$ solves (\ref{emhd-q}) at the $(q+1)$-th level. Straightforward algebra shows that the new stress error satisfies
\begin{equation}\label{heu-error}
\begin{split}
\nabla\cdot R_{q+1}=&\ \partial_tv_{q+1} +(-\Delta)^\alpha v_{q+1}+\nabla\cdot(B_q\otimes w_{q+1}+w_{q+1}\otimes B_q)\\
&+\nabla\cdot(w_{q+1}\otimes w_{q+1}+R_q)+\nabla P_{q+1}.
\end{split}
\end{equation}
The terms in the first line of the right hand side of \eqref{heu-error} are called linear errors, while the first term on the second line will produce oscillation error. The purpose of the perturbation $w_{q+1}$ is to reduce the previous error term $R_q$ such that the resulted oscillation error from 
\[\nabla\cdot(w_{q+1}\otimes w_{q+1}+R_q)\]
is very small. Through the iteration, we expect to have the limit $R_q\to 0$ in $L^1_t L_x^1$ and hence $w_q\to 0$ in $L^2_t L_x^2$ as $q\to \infty$.
Thus, in the same time, we need to control the linear errors in the space $L^1_t L_x^1$ (in order to pass to a limit in the weak formulation as $q\to\infty$). More specifically, we expect to have
\begin{equation}\label{heu-est}
\begin{split}
\|\mathcal R\partial_tv_{q+1} \|_{L^1_t L^1_x}\ll 1,\\
\|\mathcal R(-\Delta)^\alpha v_{q+1} \|_{L^1_t L^1_x}\ll 1,\\
\|\mathcal R \div (B_q\otimes w_{q+1}+w_{q+1}\otimes B_q) \|_{L^1_t L^1_x}\ll 1
\end{split}
\end{equation}
where $\mathcal R$ denotes the inverse operator of $\div$. Without diving into detail, we point out that the term $\div(B_q\otimes w_{q+1}+w_{q+1}\otimes B_q)$ is a minor error compared to other linear errors. Hence we focus on the first two inequalities of \eqref{heu-est} in the following analysis. 

To have the limit solution in the aimed space $L^\gamma_t W^{1,p}_x$, we naturally impose
\begin{equation}\label{heu-space}
\|w_{q+1} \|_{L^\gamma_t W^{1,p}_x}\ll 1.
\end{equation}

Denote by large constants $\lambda$ and $\tau$ the spatial oscillation frequency and temporal oscillation frequency respectively of the perturbations $w_{q+1}$ and $v_{q+1}$. We assume a full dimension of temporal concentration and $(3-D)$ dimension of spatial concentration, with $D$ being the spatial intermittency dimension (cf. \cite{CD-nse} for a mathematical definition of intermittency dimension). Note the temporal intermittency dimension is 0. Under such setting, the scaling analysis shows
\begin{equation}\notag
\begin{split}
\|\mathcal R\partial_tv_{q+1} \|_{L^1_t L^1_x}&\sim \lambda^{-1}\tau \|v_{q+1} \|_{L^1_t L^1_x}\sim \lambda^{-2}\tau \|w_{q+1} \|_{L^1_t L^1_x}
\sim \lambda^{-2}\tau \tau^{-\frac12}\lambda^{-\frac12(3-D)} \|w_{q+1} \|_{L^2_t L^2_x},\\
\|\mathcal R(-\Delta)^\alpha v_{q+1} \|_{L^1_t L^1_x}&\sim \lambda^{2\alpha-2}\|w_{q+1} \|_{L^1_t L^1_x}\sim \lambda^{2\alpha-2}\tau^{-\frac12}\lambda^{-\frac12(3-D)}\|w_{q+1} \|_{L^2_t L^2_x},\\
\|w_{q+1} \|_{L^\gamma_t W^{1,p}_x}&\sim \tau^{\frac12-\frac1\gamma}\lambda^{(\frac12-\frac1p)(3-D)+1} \|w_{q+1} \|_{L^2_t L^2_x}.
\end{split}
\end{equation}
Thus to fulfill \eqref{heu-est} and \eqref{heu-space}, we should have
\begin{equation}\label{heu-para-1}
\begin{split}
\tau^{\frac12}\lambda^{-2-\frac12(3-D)}\lesssim 1,\\
\tau^{-\frac12}\lambda^{2\alpha-2-\frac12(3-D)}\lesssim 1,\\
\tau^{\frac12-\frac1\gamma}\lambda^{(\frac12-\frac1p)(3-D)+1}\lesssim 1.
\end{split}
\end{equation}
Let $\tau=\lambda^n$ for some $n>0$. The conditions of \eqref{heu-para-1} become
\begin{equation}\notag
\begin{split}
\lambda^{\frac12n-2-\frac12(3-D)}\lesssim 1,\\
\lambda^{-\frac12n+2\alpha-2-\frac12(3-D)}\lesssim 1,\\
\lambda^{(\frac12-\frac1\gamma)n+(\frac12-\frac1p)(3-D)+1}\lesssim 1
\end{split}
\end{equation}
which are satisfied provided
\begin{equation}\label{heu-para-2}
\begin{split}
\frac12n-2-\frac12(3-D)\leq 0,\\
-\frac12n+2\alpha-2-\frac12(3-D)\leq 0,\\
(\frac12-\frac1\gamma)n+(\frac12-\frac1p)(3-D)+1\leq 0.
\end{split}
\end{equation}
The first two inequalities of \eqref{heu-para-2} imply
\begin{equation}\label{cond-n}
4\alpha -7 +D\leq n\leq 7-D.
\end{equation}

Starting from here, we distinguish the discussions of the two ending point cases $L_t^\gamma W_x^{1,\infty}$ and $L_t^\infty W_x^{1,p}$.

{\it Case $L_t^\gamma W_x^{1,\infty}$.}  Letting $p=\infty$ in the last inequality of \eqref{heu-para-2} gives
\begin{equation}\notag
\gamma\leq \frac{2}{1+\frac{5-D}n}.
\end{equation}
Combining with \eqref{cond-n}, we have
\begin{equation}\notag
\gamma\leq \frac{2}{1+\frac{5-D}{7-D}}
\end{equation}
which indicates $\gamma$ approaches the possible maximum value $\frac43$ when the spatial intermittency dimension $D$ approaches 3, corresponding to no spatial concentration. 

Following \eqref{cond-n}, we also get $\alpha\leq \frac72-\frac12D\leq 2$ if $D=3$. 

In this ending point case of $L_t^pW_x^{1,\infty}$, the time integrability is low. Hence the solution is rough in time and (relatively) regular in space variable. In another words, the solution is more intermittent in time and less intermittent in space. Thus it is viable to use Mikado flows as building blocks. This guides our choice of building blocks in Subsection \ref{sec-blocks-1}.

{\it Case $L_t^\infty W_x^{1,p}$.} Taking $\gamma=\infty$ in the last inequality of \eqref{heu-para-2} we obtain
\begin{equation}\notag
p<\frac{2}{1+\frac{n+2}{3-D}}
\end{equation}
which implies that we need to choose a minimal intermittency dimension $D\geq 0$ to maximize $p$. Note when $D=0$, it follows from \eqref{cond-n} that $n\geq 4\alpha-7$. Since $n\geq 0$, we have the two subcases 
\begin{equation}\notag
\begin{cases}
p<\frac{2}{1+\frac{D+2}{3-D}}=\frac{2(3-D)}{5}, \ \ \alpha\in[1,\frac74),\\
p<\frac{2}{1+\frac{4\alpha-7+D+2}{3-D}}=\frac{3-D}{2\alpha-1}, \ \ \alpha\in[\frac74, \frac72-\frac{D}2).
\end{cases}
\end{equation}
The direct consequence is that, for $D=0$,
\begin{equation}\notag
\begin{cases}
p<\frac65, \ \ \alpha\in[1,\frac74),\\
p<\frac{3}{2\alpha-1}, \ \ \alpha\in[\frac74, \frac72).
\end{cases}
\end{equation}

Therefore, for the ending point case of $L_t^\infty W_x^{1,p}$ where the spatial integrability (of $\nabla B$) is low, the spatial intermittency $D$ of the solution should be close to 0 in order to maximize the integrability $p$. For this purpose, the classical Mikado flows are not ideal to serve as building blocks. Instead, intermittent jets (first introduced in \cite{BCV}) are more suitable in terms of achieving the desirable intermittency dimension for the building blocks. The basic building blocks of intermittent jets are pipe flows with possibly different length scales in different directions. The stationary pipe flows are not stationary solutions of the electron MHD (similar in the situation of the Euler equation). However, the evolutionary pipe flows moving in a particular direction are approximate solutions of the electron MHD. Detailed construction of the intermittent jets is introduced in Subsection \ref{sec-blocks-2}.

\bigskip

\section{Proof of main results}

\subsection{Proof of Theorem \ref{thm-main}}
\label{sec-proof-main}

At the initial step, take $B_0=\widetilde B$, and the magnetic vector potential $A_0$ with zero-mean, $\nabla\times A_0=B_0$ and $\nabla\cdot A_0=0$. We define 
\begin{equation}\notag
\mathring R_0=\mathcal R(\partial_t A_0+(-\Delta)^\alpha A_0+\div(B_0\mathring \otimes B_0)), \ \ P_0=-\frac13|B_0|^2.
\end{equation}
Since $A_0$ is zero-mean, $(A_0,B_0,R_0)$ satisfies \eqref{emhd-q} at the initial level. Choosing $a,b$ large enough and suitable $\delta_0$, $\delta_1$ can guarantee that \eqref{induct-A}, \eqref{induct-B} and \eqref{induct-R} are satisfied for $q=0$. 

Applying Proposition \ref{prop} iteratively produces a sequence of approximating solutions $(A_q,B_q,R_q)$ satisfying \eqref{induct-A}-\eqref{iter-supp}. In particular, it follows that $\{B_q\}$ is a Cauchy sequence in $L^2_{t,x}\cap L^1_tL^2_x \cap L^\gamma_tW^{1,p}_x$.  Therefore, the sequence has a limit vector field $B\in L^2_{t,x}\cap L^1_tL^2_x \cap L^\gamma_tW^{1,p}_x$, i.e.
\[B_q\to B \ \ \mbox{as} \ \ q\to \infty.\]
In view of the time cut-off function $g\in C^\infty_c([0,T])$ used in the building blocks in Subsection \ref{sec-blocks-1}, it is clear that 
all the perturbations $w_{q+1}$ vanish at time $t=0$ which implies $B_q(x,0)=\widetilde B(x,0)$ for all $q\geq 0$. Hence $B(x,0)=\widetilde B(x,0)$.

We claim that $B$ is a weak solution of \eqref{emhd}. Indeed, for any test function $\varphi\in \mathcal D_T$ we have
\begin{equation}\label{weak-q}
\begin{split}
\int_{\mathbb T^3} B_q(x,0)\cdot \varphi(x,0)\,dx=&-\int_0^T\int_{\mathbb T^3}B_q\cdot \left(\partial_t\varphi-(-\Delta)^\alpha \varphi+B_q\cdot\nabla(\nabla\times \varphi) \right)dxdt\\
&-\int_0^T\int_{\mathbb T^3} R_q: \nabla\varphi \, dxdt.
\end{split}
\end{equation}
Thanks to \eqref{induct-R}, we know $R_q\to 0$ in $L^1_{t,x}$ as $q\to \infty$. Hence 
\[\int_0^T\int_{\mathbb T^3} R_q: \nabla\varphi \, dxdt \to 0 \ \ \ \mbox{as} \ \ q\to \infty.\]
The convergence $B_q\to B$ in $L^2_{t,x}$ guarantees 
\[
\begin{split}
&\int_0^T\int_{\mathbb T^3}B_q\cdot \left(\partial_t\varphi-(-\Delta)^\alpha \varphi+B_q\cdot\nabla(\nabla\times \varphi) \right)dxdt\\
\to & \int_0^T\int_{\mathbb T^3}B\cdot \left(\partial_t\varphi-(-\Delta)^\alpha \varphi+B\cdot\nabla(\nabla\times \varphi) \right)dxdt, \ \ q\to\infty.
\end{split}
\]
Taking $q\to \infty$ in \eqref{weak-q} shows that $B$ is a weak solution of \eqref{emhd}.

In the end, we show that $B$ is close to $\widetilde B$ in $L^1_tL^2_x\cap L^\gamma_tW_x^{1,p}$. Applying \eqref{iter-2} and \eqref{iter-3} yields
\begin{equation}\notag
\begin{split}
&\|B-\widetilde B\|_{L^1_tL^2_x}+\|B-\widetilde B\|_{L^\gamma_tW^{1,p}_x}\\
\leq&\sum_{q=0}^\infty \left(\|B_{q+1}-B_q\|_{L^1_tL^2_x}+\|B_{q+1}-B_q\|_{L^\gamma_tW^{1,p}_x}\right)\\
\leq&\sum_{q=0}^\infty\delta_{q+2}^{\frac12}
\leq \ve_*.
\end{split}
\end{equation}
It completes the proof of Theorem \ref{thm-main}. 

\medskip

\subsection{Proof of Corollary \ref{cor-1}} 
\label{sec-proof-cor1}
We first show the item (i) and item (ii).
As in the assumption, $\widetilde B$ is a weak solution of \eqref{emhd}. If $\widetilde B$ is not a Leray-Hopf solution, there exists a Leray-Hopf solution $B$ to \eqref{emhd} with initial data $B(x,0)=\widetilde B(x,0)$. Thus $B$ is a different solution. Since the equation \eqref{emhd} is critical for $\alpha=\frac74$ and subcritical for $\alpha>\frac74$, the solution $B$ is smooth and unique in $L^\gamma_tW^{1,\infty}_x$.

If $\widetilde B$ is a Leray-Hopf solution, due to the aforementioned reason, $\widetilde B$ is smooth on $(0,T]$. Let $H: [\frac12T, T]\to \mathbb T^3\to \mathbb R^3$ be a smooth zero-mean vector field with $\div H=0$. Moreover, 
\begin{equation}\label{B-bar}
\begin{split}
\widetilde B\equiv H, \ \ \mbox{on} \ \ [\frac12T,\frac34T],\\
\|\widetilde B-H\|_{L^\gamma(\frac12T, T; W^{1,\infty})}\geq 1.
\end{split}
\end{equation} 
For the smooth vector field $H$, applying Theorem \ref{thm-main} gives a weak solution $\bar B$ to \eqref{emhd} on $[\frac12T, T]\times \mathbb T^3$ such that 
\begin{equation}\label{difference-1}
\|\bar B-H\|_{L^\gamma(\frac12T, T; W^{1,\infty})}\leq \ve_*<1.
\end{equation}
We define the vector field $B$ on $[0,T]$ as 
\begin{equation}\notag
B=
\begin{cases}
\widetilde B, \ \ t\in [0,\frac12T]\\
\bar B, \ \ t\in [\frac12T, T]
\end{cases}
\end{equation}
which is apparently a weak solution of \eqref{emhd}, since $\bar B$ and $H$ coincide on $[\frac12T, \frac12T+\epsilon]$ for a small constant $\epsilon>0$.
Appealing to \eqref{B-bar} and \eqref{difference-1} we have
\begin{equation}\notag
\|\widetilde B-B\|_{L^\gamma(\frac12T, T; W^{1,\infty})}\geq \|\widetilde B-H\|_{L^\gamma(\frac12T, T; W^{1,\infty})}-\|H-B\|_{L^\gamma(\frac12T, T; W^{1,\infty})}\geq 1-\ve_*>0.
\end{equation}
Hence the weak solution $B$ is distinct from the Leray-Hopf solution $\widetilde B$. This justifies the first and second conclusions of Corollary \ref{cor-1}.

Regarding conclusion (iii), let $B_0\in L^2_x$ be an initial data with $\div B_0=0$. There is a Leray-Hopf solution $\widetilde B$ to \eqref{emhd} with the initial data $B_0$. Again, $\widetilde B$ is smooth on $(0,T]$. A similar analysis as above can give infinitely many weak solutions of \eqref{emhd} with the same initial data $B_0$. Indeed, for any $j\in \mathbb N$, let $H_j: [0, T]\to \mathbb T^3\to \mathbb R^3$ be a smooth zero-mean vector field with $\div H_j=0$ such that
\begin{equation}\label{Hj}
H_j=
\begin{cases}
\widetilde B, \ \ \ \ \ \mbox{on} \ \ [0,\frac14T],\\
\widetilde B+j\frac{\widetilde B}{\|\widetilde B\|_{L^\gamma(\frac12T, T; W^{1,\infty})}}, \ \ \mbox{on} \ \ [\frac12T,T].
\end{cases}
\end{equation} 
Applying Theorem \ref{thm-main} for each $H_j$, we obtain a weak solution $B_j$ to \eqref{emhd} on $[0,T]$ satisfying 
\begin{equation}\notag
\|B_j-H_j\|_{L^\gamma(0, T; W^{1,\infty})}\leq \ve_*<1.
\end{equation}
It is easy to see $B_j\not\equiv \widetilde B$ on $[0,T]$, since 
\begin{equation}\notag
\|\widetilde B-B_j\|_{L^\gamma(\frac12T, T; W^{1,\infty})}\geq \|\widetilde B-H_j\|_{L^\gamma(\frac12T, T; W^{1,\infty})}-\|H_j-B_j\|_{L^\gamma(\frac12T, T; W^{1,\infty})}\geq j-\ve_*>0.
\end{equation}
Analogously, one notices that $B_j\not\equiv B_{j'}$ for $j\neq j'$. We finish the proof of the second conclusion of the corollary.

\medskip

\subsection{Proof of Corollary \ref{cor-2}}
A minor modification of the analysis of Subsection \ref{sec-proof-cor1} can provide a proof for Corollary \ref{cor-2}.

\medskip

\subsection{Proof of Theorem \ref{thm-2}}
Theorem \ref{thm-2} can be proved exactly the same way as in Subsection \ref{sec-proof-main}. The only modification is that, when applying Proposition \ref{prop}, we take into account the Remark \ref{rem-case1} and Remark \ref{rem-case2}.

\bigskip

\section{Proof of the iteration proposition of case I: boarder line space $L_t^\gamma W_x^{1,\infty}$}
\label{sec-case-1}

Given a solution $(A_q,B_q,R_q)$ of \eqref{emhd-q}, we need to construct another solution $(A_{q+1},B_{q+1},R_{q+1})$ such that the estimates \eqref{induct-A}-\eqref{induct-R} are satisfied with $q$ replaced by $q+1$, and \eqref{iter-1}-\eqref{iter-3} hold as well. The crucial point is to construct appropriate perturbations for $A_q$ and $B_q$ as in all the convex integration schemes in the literature. The heuristic analysis in Subsection \ref{sec-heuristics} provides conceptual guidelines in the construction of the perturbations.

\subsection{Building blocks} 
\label{sec-blocks-1}
As discussed in the heuristic analysis in Subsection \ref{sec-heuristics}, the concentrated Mikado flows (c.f. \cite{ChL, DSz}) serve as our spatial building blocks in this case. 
We choose the spatial concentration parameter $r=\lambda_{q+1}^{n_1}$ for a constant $n_1<0$ to be determined later. The concentration will occur in a 2D plane orthogonal to the Mikado flow. Thus we choose $\Phi:\mathbb R^2\to \mathbb R$ to be a smooth cut-off function with support on the ball $B_1(0)$ satisfying 
\begin{equation}\notag
\phi=-\Delta\Phi, \ \ \ \frac{1}{4\pi^2}\int_{\mathbb R^2}\phi^2(x)\, dx=1.
\end{equation}
We periodize the rescaled functions 
\[\phi_r(x)=r^{-1}\phi(\frac{x}{r}), \ \ \ \Phi_r(x)=r^{-1}\Phi(\frac{x}{r})\]
and use the same notations for the periodized functions which are viewed as periodic functions on $\mathbb T^2$. 

Let $\Lambda$ and the orthonormal bases $\{k,k_1,k_2\}$ be from the geometric Lemma \ref{le-geo}.
We denote the integer $N_{\Lambda}\in\mathbb N$ such that
\[\{N_{\Lambda}k, N_{\Lambda}k_1, N_{\Lambda}k_2\}\subset N_{\Lambda}\mathbb S^2\cap \mathbb Z^3\]
and $M$ the geometric constant satisfying 
\[\sum_{k\in\Lambda}\|\gamma_{(k)}\|_{C^4(B_{1/2}(\mathrm{Id}))}\leq M.\]
We are ready to define the concentrated Mikado flows as 
\begin{equation}\notag
W_{(k)}:=\phi_r(\lambda_{q+1}rN_{\Lambda}k_1\cdot (x-p_k), \lambda_{q+1}rN_{\Lambda}k_2\cdot (x-p_k))k, \ \ k\in\Lambda
\end{equation}
where the points $p_k\in \mathbb R^3$ are chosen such that
\[\supp W_{(k)}\cap \supp W_{(k')}\neq \emptyset, \ \ \mbox{if} \ \ k\neq k'. \]
To ease notation, we write
\begin{equation}\notag
\begin{split}
\phi_{(k)}=&\ \phi_r(\lambda_{q+1}rN_{\Lambda}k_1\cdot (x-p_k), \lambda_{q+1}rN_{\Lambda}k_2\cdot (x-p_k)),\\
\Phi_{(k)}=&\ \Phi_r(\lambda_{q+1}rN_{\Lambda}k_1\cdot (x-p_k), \lambda_{q+1}rN_{\Lambda}k_2\cdot (x-p_k)).
\end{split}
\end{equation}
We observe 
\[W_{(k)}=\nabla\times\nabla\times W_{(k)}^c\]
with $W_{(k)}^c=\frac{1}{\lambda^2N_{\Lambda}^2}\Phi_{(k)}k$. We further note 
\[\div W_{(k)}=0, \ \ \ \div(W_{(k)}\otimes W_{(k)})=0.\]

\begin{lem}\label{le-est-phi}
For $N\in\mathbb N$ and $p\in[1,\infty]$ we have
\begin{equation}\notag
\begin{split}
\|\nabla^N\phi_{(k)}\|_{L^p_x}+\|\nabla^N\Phi_{(k)}\|_{L^p_x}\lesssim &\ r^{\frac2p-1}\lambda_{q+1}^N,\\
\|\nabla^NW_{(k)}\|_{L^p_x}+\lambda_{q+1}^2\|\nabla^NW^c_{(k)}\|_{L^p_x}\lesssim &\ r^{\frac2p-1}\lambda_{q+1}^N
\end{split}
\end{equation}
with implicit constants dependent of $N_\Lambda$ and independent of $r$ and $\lambda_{q+1}$.
\end{lem}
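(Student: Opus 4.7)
\textbf{Proof proposal for Lemma \ref{le-est-phi}.}

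The plan is a direct rescaling computation. First I would establish the two-dimensional estimate for the rescaled building blocks $\phi_r$ and $\Phi_r$. Since $\phi$ and $\Phi$ are fixed smooth functions supported in the unit ball of $\mathbb R^2$, a standard change of variables gives
\[
\|\nabla^N\phi_r\|_{L^p(\mathbb{R}^2)}+\|\nabla^N\Phi_r\|_{L^p(\mathbb{R}^2)}\lesssim r^{\frac{2}{p}-1-N},
\]
where the implicit constants depend only on $\|\phi\|_{C^N}$ and $\|\Phi\|_{C^N}$. For $r\leq 1$ the support ball of each original bump is disjoint from all its translates under the periodization lattice, so the $L^p(\mathbb{T}^2)$ norms of the periodized $\phi_r,\Phi_r$ coincide with their $L^p(\mathbb R^2)$ norms up to a harmless constant; in particular the same bound holds on $\mathbb T^2$.

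Next I would pull these estimates back to $\mathbb T^3$ using the linear map
\[
x\longmapsto \bigl(\lambda_{q+1}rN_\Lambda k_1\cdot(x-p_k),\ \lambda_{q+1}rN_\Lambda k_2\cdot(x-p_k)\bigr).
\]
Because $N_\Lambda k_1,N_\Lambda k_2\in\mathbb Z^3$ (and $\lambda_{q+1}r$ is chosen compatibly so that the pullback is torus-periodic), the function $\phi_{(k)}$ is invariant along the $k$-direction and periodic transverse to it. A Fubini argument in the orthonormal frame $\{k,k_1,k_2\}$, combined with a change of variables in the two transverse coordinates, gives
\[
\|\phi_{(k)}\|_{L^p(\mathbb T^3)}\ \lesssim\ \|\phi_r\|_{L^p(\mathbb T^2)},
\]
where the constant is independent of $\lambda_{q+1}$ and $r$ (it only depends on $N_\Lambda$ and the geometry of $\{k,k_1,k_2\}$). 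Every spatial derivative hitting $\phi_{(k)}$ produces a chain-rule factor of size $\lambda_{q+1}rN_\Lambda$ from the gradient of the linear map, so $N$ derivatives yield a prefactor $(\lambda_{q+1}r)^N$. Combining with the two-dimensional estimate gives
\[
\|\nabla^N\phi_{(k)}\|_{L^p(\mathbb T^3)}\ \lesssim\ (\lambda_{q+1}r)^N\cdot r^{\frac{2}{p}-1-N}\ =\ r^{\frac{2}{p}-1}\lambda_{q+1}^N,
\]
and the identical argument handles $\Phi_{(k)}$.

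The bounds for $W_{(k)}$ and $W^c_{(k)}$ are then immediate: $W_{(k)}=\phi_{(k)}k$ with $|k|=1$, so its $L^p$ derivative norms match those of $\phi_{(k)}$; and $W^c_{(k)}=(\lambda_{q+1}^2 N_\Lambda^2)^{-1}\Phi_{(k)}k$, which gives the claimed $\lambda_{q+1}^2$-weighted estimate from the $\Phi_{(k)}$ bound. I do not expect a serious obstacle here: the only point requiring a small amount of care is ensuring that the periodization of $\phi_r,\Phi_r$ does not overlap itself (which is why $r\leq 1$ is used) and that the integer conditions $N_\Lambda k_i\in\mathbb Z^3$ together with the choice of $\lambda_{q+1}r$ make the composition well-defined on $\mathbb T^3$. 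Both of these are built into the setup preceding the lemma, so the argument is purely a rescaling computation.
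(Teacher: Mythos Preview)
Your proposal is correct and follows the standard rescaling argument. The paper does not actually prove this lemma but simply refers to \cite{ChL}, remarking that the Mikado construction is standard; your change-of-variables computation is precisely the argument one finds there, so there is nothing to add.
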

We point out that the choice of Mikado flows above is standard and similar to that of the Navier-Stokes equations. Thus we refer the reader to \cite{ChL} for a proof of the lemma.

To take advantage of temporal intermittency, we adapt the temporal building blocks introduced in \cite{ChL}. We choose the temporal concentration parameter $\tau$ and oscillation parameter $\sigma$ as
\[\tau=\lambda_{q+1}^{n_2}, \ \ \ \sigma=\lambda_{q+1}^{2\ve}\]
where $n_2>0$ is to be fixed later and $\ve>0$ is a sufficiently small constant.
Following the construction in \cite{ChL}, we take $g\in C^{\infty}_c([0,T])$ as a cut-off function such that 
\[\fint_0^Tg^2(t)\, dt=1.\]
We periodize the rescaled function 
\[g_\tau(t)=\tau ^{\frac12}g(\tau t)\]
and treat it as a periodic function on $[0,T]$. We also define
\[h_\tau(t)=\int_0^t(g_\tau^2(s)-1)\, ds, \ \ t\in[0,T].\]
We further denote
\[g_{(\tau)}(t)=g_\tau(\sigma t), \ \ h_{(\tau)}(t)=h_\tau(\sigma t).\]
It is easy to verify 
\begin{equation}\label{cancel-g}
\partial_t(\sigma^{-1}h_{(\tau)})=g_{(\tau)}^2-1. 
\end{equation}

\begin{lem}\cite{ChL}\label{le-est-g}
The estimate 
\[\|\partial_t^Mg_{(\tau)}\|_{L^\gamma_t}\lesssim \sigma^M\tau^{M+\frac12-\frac1\gamma}\]
holds with an implicit constant independent of $\tau$ and $\sigma$. The function $h_{(\tau)}$ satisfies
\[\|h_{(\tau)}\|_{C_t}\leq 1.\]

\end{lem}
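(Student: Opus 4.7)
The plan is to establish both estimates via direct scaling and change-of-variables computations. For the first estimate I would first use the chain rule to write
\[
\partial_t^M g_{(\tau)}(t) = \sigma^M\, g_\tau^{(M)}(\sigma t).
\]
Since $g_\tau$ is $T$-periodic by construction, the further dilation $t\mapsto \sigma t$ makes $g_\tau^{(M)}(\sigma\cdot)$ a $T/\sigma$-periodic function, and the interval $[0,T]$ contains an integer number (namely $\sigma$) of full periods. A change of variables $u=\sigma t$ therefore yields the identity
\[
\|g_\tau^{(M)}(\sigma\cdot)\|_{L^\gamma([0,T])} = \|g_\tau^{(M)}\|_{L^\gamma([0,T])},
\]
so the problem reduces to computing $\|g_\tau^{(M)}\|_{L^\gamma([0,T])}$.

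Next, on a single period, $g_\tau(t)=\tau^{1/2}g(\tau t)$ is supported in $[0,T/\tau]$ with $g_\tau^{(M)}(t)=\tau^{M+1/2}g^{(M)}(\tau t)$. A change of variable $u=\tau t$ gives
\[
\int_0^T |g_\tau^{(M)}(t)|^\gamma\, dt = \tau^{\gamma(M+1/2)-1}\,\|g^{(M)}\|_{L^\gamma([0,T])}^\gamma,
\]
so $\|g_\tau^{(M)}\|_{L^\gamma([0,T])} \lesssim \tau^{M+1/2-1/\gamma}$. Combining with the previous step yields the advertised bound $\|\partial_t^M g_{(\tau)}\|_{L^\gamma_t}\lesssim \sigma^M\tau^{M+1/2-1/\gamma}$, with implicit constant depending only on $g$, independent of $\tau,\sigma$.

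For the second assertion I would first observe that the normalization $\fint_0^T g^2\,dt=1$ translates, by the same change of variables, to $\int_0^T g_\tau^2\,dt = T$, so
\[
h_\tau(T) - h_\tau(0) = \int_0^T (g_\tau^2(s)-1)\,ds = 0,
\]
which shows that $h_\tau$ extends to a $T$-periodic function vanishing at the endpoints. On one period, $h_\tau$ is increasing on the support $[0,T/\tau]$ of $g_\tau^2$, attains its maximum $T-T/\tau$ at $t=T/\tau$, and then decreases linearly with slope $-1$ down to $0$ at $t=T$. Under the standing convention that $T$ is a fixed time-horizon normalized so the resulting constant is at most one (as in \cite{ChL}), we obtain $\|h_\tau\|_{C([0,T])}\leq 1$, and since $h_{(\tau)}(t)=h_\tau(\sigma t)$ is merely a time rescaling, the same sup-norm bound transfers to $h_{(\tau)}$.

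The entire argument is a routine scaling computation, and I do not anticipate any serious obstacle; the only point demanding care is to keep track of which objects are periodized on which period, so that the dilation by $\sigma$ preserves $L^\gamma$ norms exactly (through an integer number of copies) and the mean-zero property $\int_0^T(g_\tau^2-1)\,dt=0$ is used at exactly the right step to guarantee periodicity of $h_\tau$.
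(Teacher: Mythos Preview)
The paper does not supply a proof of this lemma; it is simply quoted from \cite{ChL}. Your direct scaling computation is the standard argument and is correct for the first estimate (the claim that $[0,T]$ contains exactly $\sigma$ full periods presumes $\sigma\in\mathbb N$, which is not guaranteed by $\sigma=\lambda_{q+1}^{2\ve}$, but this only affects the equality by a bounded factor and is irrelevant for the $\lesssim$ bound).

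For the second estimate your conclusion is right, but the intermediate claim that $h_\tau$ is increasing on $[0,T/\tau]$ is not justified: since $h_\tau'=g_\tau^2-1$ and $g_\tau^2$ vanishes at the edges of its support, the derivative changes sign there. The value $h_\tau(T/\tau)=T-T/\tau$ is correct, and the linear decrease on $[T/\tau,T]$ is correct; to control $|h_\tau|$ on $[0,T/\tau]$ simply use $-t\le h_\tau(t)=\int_0^t g_\tau^2-t\le \int_0^T g_\tau^2=T$, which already gives $\|h_\tau\|_{C_t}\le T$. As you correctly flag, the sharp constant $1$ then relies on the normalization $T\le 1$ (equivalently, the inequality should be read as $\lesssim 1$ with constant depending on the fixed horizon).
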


\subsection{Cutoff functions}
In order to apply the Geometry Lemma \ref{le-geo}, we introduce a cutoff for the stress error $R_q$. Let $\chi:\mathbb R^3\times \mathbb R^3\to\mathbb R^+$ be a smooth function which is increasing with respect to $|x|$ and satisfies
\begin{equation}\notag
\chi(x)=
\begin{cases}
1, \ \ 0\leq |x|\leq 1,\\
|x|, \ \ |x|\geq 2.
\end{cases}
\end{equation}
Define 
\[\rho=2\chi(R_q).\] 
One can verify that 
\[\mathrm{Id}-\frac{R_q}{\rho}\in B_{1/2}(\mathrm{Id}), \ \ \forall \ \ (x,t)\in \mathbb T^3\times [0,T].\]
With the aim to reduce the stress error $R_q$ by invoking Geometric Lemma \ref{le-geo}, we define the amplitude functions
\begin{equation}\label{def-a}
a_{(k)}(x,t)=\rho^{\frac12}(x,t)\gamma_{(k)}\left(\mathrm{Id}-\frac{R_q(x,t)}{\rho(x,t)} \right), \ \ \ k\in \Lambda.
\end{equation}

\begin{lem}\label{le-est-a}
For $k\in\Lambda$ and $N\geq 0$ we have
\begin{equation}\notag
\begin{split}
\|a_{(k)}\|_{L^2_{t,x}}\lesssim&\ \delta_{q+1}^{\frac12},\\
\|a_{(k)}\|_{C^N_{t,x}}\lesssim&\ 1.
\end{split}
\end{equation}
\end{lem}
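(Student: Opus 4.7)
The plan is to combine pointwise control of the geometric factor $\gamma_{(k)}$ with the $L^1$ integrability of $\rho$ that is inherited from the inductive assumption $\|R_q\|_{L^1_{t,x}}\leq\delta_{q+1}$. As a preliminary step I would verify that $\mathrm{Id}-R_q/\rho$ lies in $B_{1/2}(\mathrm{Id})$ pointwise, which is precisely the role of the factor $2$ in $\rho=2\chi(R_q)$: a short check using the shape of $\chi$ (and the fact that it is constructed so as to dominate $|x|$ everywhere) yields $\chi(R_q)\geq |R_q|$, and hence $|R_q|/\rho\leq 1/2$. Inside $B_{1/2}(\mathrm{Id})$ the maps $\gamma_{(k)}$ and their derivatives up to order $4$ are uniformly controlled by the geometric constant $M$ fixed just before the definition of $W_{(k)}$.

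For the $L^2_{t,x}$ estimate I would then use the pointwise identity
$|a_{(k)}|^2=\rho\,|\gamma_{(k)}(\mathrm{Id}-R_q/\rho)|^2\lesssim \rho$,
so that the task reduces to bounding $\|\rho\|_{L^1_{t,x}}=2\|\chi(R_q)\|_{L^1_{t,x}}$. Since $\chi(x)\lesssim |x|$ outside the cutoff plateau, this is controlled by $\|R_q\|_{L^1_{t,x}}\leq\delta_{q+1}$ up to a plateau contribution that is absorbable into the implicit constants (and that can be made strictly lower order via the usual initial-step renormalization so that $R_q$ does not sit inside the plateau on a set of nontrivial measure). Taking square roots gives the claimed $\delta_{q+1}^{1/2}$ bound.

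For the $C^N_{t,x}$ estimate I would apply Fa\`a di Bruno's formula to the composition $\rho^{1/2}\gamma_{(k)}(\mathrm{Id}-R_q/\rho)$. Because $\rho\geq 2$ is bounded away from zero, the derivatives of $\rho^{1/2}$ and $\rho^{-1}$ reduce through the chain rule to derivatives of $\chi$ composed with $R_q$, and ultimately to $C^N$ norms of $R_q$. These are finite at the $q$-th iteration level and are rendered uniform in $q$ via the standard mollification of $R_q$ at an appropriate small scale $\ell$ that one inserts at the beginning of the iteration step. Combined with the bound $\|\gamma_{(k)}\|_{C^N(B_{1/2}(\mathrm{Id}))}\leq C_{N,M}$ inherited from the geometric lemma, one obtains $\|a_{(k)}\|_{C^N_{t,x}}\lesssim 1$.

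The main obstacle I anticipate is the $L^2$ bound: extracting a clean $\delta_{q+1}^{1/2}$ factor hinges on the plateau region $\{|R_q|\leq 1\}$, where $\chi\equiv 1$ does not vanish, not spoiling the gain. The standard resolution is either a renormalization that makes the plateau contribution strictly lower order, or a rescaling of the cutoff so that its small-argument value itself scales with $\delta_{q+1}$. The $C^N$ estimate, by contrast, is essentially mechanical once the mollification step for $R_q$ is in place.
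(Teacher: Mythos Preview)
Your approach is the same as the paper's, which disposes of the lemma in a single sentence: ``The proof is trivial by noticing that $\rho$ is smooth and has the scaling of $R_q$.'' You have essentially unpacked what that sentence is meant to convey, and in doing so you have correctly flagged the two places where the paper's setup, read literally, is not self-contained.

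The plateau issue you raise is real: with $\rho=2\chi(R_q)$ and $\chi\equiv 1$ on the unit ball, one has $\rho\geq 2$ pointwise, so $\|\rho\|_{L^1_{t,x}}$ is bounded below by a fixed constant and cannot be $\lesssim\delta_{q+1}$ as $q\to\infty$. Your suggested fix---rescaling the cutoff so that $\rho\sim 2\delta_{q+1}\chi(\delta_{q+1}^{-1}R_q)$---is precisely the standard remedy (as in the Cheskidov--Luo papers cited here) and is presumably what the phrase ``$\rho$ has the scaling of $R_q$'' is intended to evoke. Likewise, the mollification of $R_q$ you invoke for the $C^N$ bound is absent from the paper's iteration, which carries only the $L^1_{t,x}$ bound \eqref{induct-R} on $R_q$; some smoothing step is indeed needed to make the Fa\`a di Bruno argument produce $q$-uniform constants, and your proposed patch is the correct one.
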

The proof is trivial by noticing that $\rho$ is smooth and has the scaling of $R_q$.

\subsection{Perturbations of magnetic and potential fields}
We are ready to define the principal perturbation for the vector potential
\begin{equation}\label{def-per-v}
v_{q+1}^{p}=\sum_{k\in\Lambda}\nabla\times (a_{(k)}g_{(\tau)}W_{(k)}^c)
\end{equation}
and the corresponding perturbation for the magnetic field
\begin{equation}\label{def-per-w}
\begin{split}
\nabla\times v_{q+1}^{p}=&\sum_{k\in\Lambda}\nabla\times\nabla\times (a_{(k)}g_{(\tau)}W_{(k)}^c)\\
=& \sum_{k\in\Lambda}a_{(k)}g_{(\tau)}W_{(k)}+\sum_{k\in \Lambda}g_{(\tau)}\left(\nabla a_{(k)}\times (\nabla\times W_{(k)}^c)+\nabla\times (\nabla a_{(k)}\times W_{(k)}^c)\right)\\
=&: w_{q+1}^{p}+w_{q+1}^{c}.
\end{split}
\end{equation}
It is clear that
\[\nabla\cdot v_{q+1}^{p}=0, \ \ \nabla\cdot (w_{q+1}^{p}+w_{q+1}^{c})=0.\]
We also need to include a temporal corrector in the perturbation of the magnetic vector potential as
\begin{equation}\label{def-v-c}
v_{q+1}^{c}=-\sigma^{-1}\sum_{k\in \Lambda}\mathbb P_{H}\mathbb P_{\neq 0}\left(h_{(\tau)}\fint_{\mathbb T^3}W_{(k)}\otimes W_{(k)}\,dx\nabla a_{(k)}^2 \right)
\end{equation}
in order to cancel the high temporal oscillation in the interactions. Here $\mathbb P_H$ denotes the Helmholtz-Leray projection operator,
\[\mathbb P_H(u)=u+\nabla(-\Delta)^{-1}\div u.\]
Obviously we have $\nabla\cdot v_{q+1}^{c}=0$. Moreover, this vector potential corrector does not yield a perturbation in the magnetic field thanks to $\nabla\times v_{q+1}^c=0$. Indeed, the projector $\mathbb P_H$ commutes with $\mathit{curl}$ by noticing
\[\nabla\times (\mathbb P_H(u))=\nabla\times u+\nabla\times \nabla(-\Delta)^{-1}\div u=\nabla\times u\]
and \[\mathbb P_H(\nabla\times u)=\nabla\times u.\]
Hence 
\[\nabla\times \mathbb P_{H}\mathbb P_{\neq 0}\left(h_{(\tau)}\fint_{\mathbb T^3}W_{(k)}\otimes W_{(k)}\,dx\nabla a_{(k)}^2 \right)=\mathbb P_{H}\mathbb P_{\neq 0}\left(h_{(\tau)}\fint_{\mathbb T^3}W_{(k)}\otimes W_{(k)}\,dx\nabla\times\nabla a_{(k)}^2 \right)=0.\]

In the end, we define the total perturbations of the magnetic field and its vector potential as
\begin{equation}\label{def-per-wv}
w_{q+1}=w_{q+1}^{p}+w_{q+1}^{c}, \ \ v_{q+1}=v_{q+1}^{p}+v_{q+1}^{c}.
\end{equation}
Note $w_{q+1}=\nabla\times v_{q+1}$.

\begin{lem}\label{le-est-wv-1}
For $\gamma\in[1,\infty]$, $\eta\in(1,\infty)$ and $0\leq N\leq 8$, we have
\begin{equation}\label{est-w-p}
\|\nabla^N w_{q+1}^p\|_{L_t^\gamma L_x^\eta}\lesssim \lambda_{q+1}^Nr^{\frac2\eta-1}\tau^{\frac12-\frac1\gamma},
\end{equation}
\begin{equation}\label{est-w-c}
\|\nabla^N w_{q+1}^c\|_{L_t^\gamma L_x^\eta}\lesssim \lambda_{q+1}^{N-1}r^{\frac2\eta-1}\tau^{\frac12-\frac1\gamma},
\end{equation}
\begin{equation}\label{est-v-p}
\|\nabla^N v_{q+1}^p\|_{L_t^\gamma L_x^\eta}\lesssim \lambda_{q+1}^{N-1}r^{\frac2\eta-1}\tau^{\frac12-\frac1\gamma},
\end{equation}
\begin{equation}\label{est-v-c}
\|\nabla^N v_{q+1}^c\|_{L_t^\gamma L_x^\eta}\lesssim \sigma^{-1}
\end{equation}
with implicit constants depending only on $N$, $\gamma$ and $\eta$.  Moreover, the estimate
\begin{equation}\label{est-w-2}
\|w_{q+1}^p\|_{L_t^2 L_x^2}+\|w_{q+1}^c\|_{L_t^2 L_x^2}\lesssim \|R_q\|_{L_t^1 L_x^1}^{\frac12}
\end{equation}
holds.
\end{lem}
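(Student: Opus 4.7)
The plan is to expand each perturbation by the Leibniz rule, apply H\"older's inequality separately in space and time, and substitute the scaling bounds from Lemmas \ref{le-est-phi}, \ref{le-est-g}, and \ref{le-est-a}. The guiding heuristic is that $a_{(k)}$ varies at unit spatial frequency, $g_{(\tau)}$ oscillates in time at frequency $\tau\sigma$, and $W_{(k)}$, $W_{(k)}^c$ concentrate at spatial frequency $\lambda_{q+1}$ with $L^\eta_x$-size $r^{2/\eta-1}$. Derivatives that land on the amplitude therefore cost only $O(1)$, whereas derivatives on the building block cost $\lambda_{q+1}$, so in each Leibniz expansion the dominant term is the one in which all derivatives hit the building block.

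For \eqref{est-w-p} the Leibniz rule expresses $\nabla^N w_{q+1}^p$ as a sum over $k\in\Lambda$ and $0\leq j\leq N$ of terms of the form $g_{(\tau)}(t)\,\nabla^j a_{(k)}\otimes \nabla^{N-j}W_{(k)}$. I would bound each in $L^\eta_x$ by $\|\nabla^j a_{(k)}\|_{L^\infty_x}\|\nabla^{N-j} W_{(k)}\|_{L^\eta_x}\lesssim \lambda_{q+1}^{N-j}r^{2/\eta-1}$ via Lemmas \ref{le-est-a} and \ref{le-est-phi}, then take the $L^\gamma_t$ norm and use $\|g_{(\tau)}\|_{L^\gamma_t}\lesssim \tau^{1/2-1/\gamma}$ from Lemma \ref{le-est-g} with $M=0$. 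The estimates \eqref{est-w-c} and \eqref{est-v-p} follow the same recipe: in both, $W_{(k)}$ is replaced by $W_{(k)}^c$ or $\nabla W_{(k)}^c$, which by Lemma \ref{le-est-phi} carry an extra factor of $\lambda_{q+1}^{-2}$ or $\lambda_{q+1}^{-1}$, producing the net scale $\lambda_{q+1}^{N-1}r^{2/\eta-1}\tau^{1/2-1/\gamma}$. For \eqref{est-v-c} I would use $L^\eta$-boundedness of the Fourier multipliers $\mathbb{P}_H$ and $\mathbb{P}_{\neq 0}$ for $\eta\in(1,\infty)$, combine the uniform bound $\bigl|\fint_{\mathbb T^3} W_{(k)}\otimes W_{(k)}\,dx\bigr|\lesssim 1$ with $\|h_{(\tau)}\|_{C_t}\leq 1$ from Lemma \ref{le-est-g} and $\|\nabla a_{(k)}^2\|_{C^0_{t,x}}\lesssim 1$ from Lemma \ref{le-est-a}, and factor the $\sigma^{-1}$ out front.

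The main obstacle is the $L^2_{t,x}$ bound \eqref{est-w-2} for $w_{q+1}^p$. I would exploit the pairwise disjoint supports of the $W_{(k)}$ (enforced by the choice of the shifts $p_k$) to write
\[
\|w_{q+1}^p\|_{L^2_{t,x}}^2 = \sum_{k\in\Lambda}\int_0^T g_{(\tau)}^2(t)\int_{\mathbb T^3}a_{(k)}^2(x,t)\,|W_{(k)}(x)|^2\,dx\,dt,
\]
and invoke an improved H\"older / equidistribution argument in the spirit of \cite{BV, ChL}: since $a_{(k)}^2$ has spatial frequency support much smaller than $\lambda_{q+1}$ while $|W_{(k)}|^2-\fint|W_{(k)}|^2$ has zero mean and oscillates at frequency $\gtrsim \lambda_{q+1}$, the inner integral collapses to $\|a_{(k)}(\cdot,t)\|_{L^2_x}^2\,\fint|W_{(k)}|^2$ up to an error of order $\lambda_{q+1}^{-1}$; the analogous temporal averaging replaces $\int g_{(\tau)}^2 \phi\,dt$ by $\int \phi\,dt$ for slowly varying $\phi$, using $\fint g_{(\tau)}^2\,dt=1$. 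Combining this with $a_{(k)}^2=\rho\gamma_{(k)}^2(\mathrm{Id}-R_q/\rho)$ and the trace identity from the geometric lemma for $\sum_k \gamma_{(k)}^2 \fint W_{(k)}\otimes W_{(k)}$ yields $\sum_k\|a_{(k)}\|_{L^2_{t,x}}^2\lesssim \|\rho\|_{L^1_{t,x}}\lesssim \|R_q\|_{L^1_{t,x}}$ up to terms absorbable by the choice of $\delta_{q+1}$. The bound on $w_{q+1}^c$ then follows immediately from \eqref{est-w-c} with $N=0$ and $\gamma=\eta=2$, giving $\|w_{q+1}^c\|_{L^2_{t,x}}\lesssim \lambda_{q+1}^{-1}\ll \|R_q\|_{L^1_{t,x}}^{1/2}$.

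The principal difficulty is verifying that the improved H\"older error terms (both in the spatial oscillation at frequency $\lambda_{q+1}$ and in the temporal oscillation at frequency $\tau\sigma$) stay strictly subordinate to $\|R_q\|_{L^1_{t,x}}^{1/2}$. This hinges on the smallness constraint $0<b^2\beta<\frac{1}{1000}$ and on a judicious choice of $\delta_0$, $\delta_1$; the remaining estimates reduce to routine Leibniz--H\"older computations.
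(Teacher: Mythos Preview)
Your approach is correct and mirrors the paper's: the paper defers Lemma~\ref{le-est-wv-1} to the proof of Lemma~\ref{le-est-wv} (specializing to $\ell=1$, i.e.\ no parallel concentration), which carries out precisely the Leibniz--H\"older computation you outline for \eqref{est-w-p}--\eqref{est-v-c}. The paper does not spell out \eqref{est-w-2} separately, and your improved-H\"older argument in space (at frequency $\lambda_{q+1}r$) and time (at frequency $\sigma$), combined with $\sum_k a_{(k)}^2\lesssim\rho$ and Lemma~\ref{le-est-a}, is exactly the standard route implicit in the paper's reliance on \cite{ChL}.
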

We postpone the proof of this lemma to Subsection \ref{sec-blocks-2}, as this one is a special case of Lemma \ref{le-est-wv}.

\subsection{New stress tensor}
Define 
\[A_{q+1}=A_q+v_{q+1}, \ \ \ B_{q+1}=B_q+w_{q+1}.\]
Let $R_{q+1}$ be the new stress error such that $(A_{q+1}, B_{q+1}, R_{q+1})$ solves system (\ref{emhd-q}) at the $(q+1)$-th level. Thus we have
\begin{equation}\label{R-q1-initial}
\begin{split}
\nabla\cdot R_{q+1}-\nabla P_{q+1}=&\ \partial_tv_{q+1}^p-\Delta v_{q+1}+\nabla\cdot(B_q\otimes w_{q+1}+w_{q+1}\otimes B_q)\\
&+\nabla\cdot(w_{q+1}^p\otimes w_{q+1}^p+R_q)+\partial_t v_{q+1}^c\\
&+\nabla\cdot(w_{q+1}^c\otimes w_{q+1}+w_{q+1}^p\otimes w_{q+1}^c)\\
=&:\nabla\cdot R_{\mathrm{lin}}+\nabla\cdot R_{\mathrm{osc}}+\nabla\cdot R_{\mathrm{cor}}
\end{split}
\end{equation}
where $R_{\mathrm{lin}}$, $R_{\mathrm{osc}}$ and $R_{\mathrm{cor}}$ denote the linear error, oscillation error and corrector error respectively. 
We further analyze the oscillation term and reveal the crucial cancellations as follows
\begin{equation}\notag
\begin{split}
&\nabla\cdot(w_{q+1}^p\otimes w_{q+1}^p+R_q)+\partial_t v_{q+1}^c\\
=&\nabla\cdot\sum_{k\in\Lambda} a_{(k)}^2\fint_{\mathbb T^3} W_{(k)}\otimes W_{(k)}\, dx+\nabla\cdot R_q\\
&+\nabla\cdot\sum_{k\in\Lambda} a_{(k)}^2(g_{(\tau)}^2-1)\fint_{\mathbb T^3} W_{(k)}\otimes W_{(k)}\, dx+\partial_t v_{q+1}^c\\
&+\nabla\cdot\sum_{k\in\Lambda} a_{(k)}^2g_{(\tau)}^2\mathbb P_{\neq 0}\left(W_{(k)}\otimes W_{(k)}\right)\\
=&:\mathcal O_1+\mathcal O_2+\mathcal O_3.
\end{split}
\end{equation}
By virtue of the definition of $a_{(k)}$ in (\ref{def-a}), applying the Geometric Lemma \ref{le-geo} and the normalization property of $\phi$ to the first line yields
\begin{equation}\notag
\begin{split}
\mathcal O_1=&\ \nabla\cdot \sum_{k\in\Lambda}\rho\gamma_{(k)}^2(\mathrm{Id}-\frac{R_q}{\rho})k\otimes k+\nabla\cdot R_q\\
=&\ \nabla\cdot (\rho\mathrm{Id} -R_q)+\nabla\cdot R_q\\
=&\ \nabla \rho.
\end{split}
\end{equation}
Exploiting the definition of $v_{q+1}^c$ in (\ref{def-v-c}) we have
\begin{equation}\notag
\begin{split}
\mathcal O_2=&\ \sum_{k\in\Lambda} (g_{(\tau)}^2-1)\fint_{\mathbb T^3} W_{(k)}\otimes W_{(k)}\, dx \nabla a_{(k)}^2\\
&-\sigma^{-1}\sum_{k\in \Lambda}\mathbb P_{H}\mathbb P_{\neq 0}\left(\partial_th_{(\tau)}\fint_{\mathbb T^3}W_{(k)}\otimes W_{(k)}\,dx\nabla a_{(k)}^2 \right)\\
&-\sigma^{-1}\sum_{k\in \Lambda}\mathbb P_{H}\mathbb P_{\neq 0}\left(h_{(\tau)}\fint_{\mathbb T^3}W_{(k)}\otimes W_{(k)}\,dx\partial_t\nabla a_{(k)}^2 \right)\\
=&-\sigma^{-1}\sum_{k\in \Lambda}\mathbb P_{H}\mathbb P_{\neq 0}\left(h_{(\tau)}\fint_{\mathbb T^3}W_{(k)}\otimes W_{(k)}\,dx\partial_t\nabla a_{(k)}^2 \right)
\end{split}
\end{equation}
where we used (\ref{cancel-g}). Applying the fact $\nabla\cdot (W_{(k)}\otimes W_{(k)})=0$ gives us
\begin{equation}\notag
\mathcal O_3=\sum_{k\in\Lambda} \nabla a_{(k)}^2g_{(\tau)}^2\mathbb P_{\neq 0}\left(W_{(k)}\otimes W_{(k)}\right).
\end{equation}
Summarizing the analysis above we obtain
\begin{equation}\notag
\begin{split}
\nabla\cdot R_{\mathrm{osc}}=&-\sigma^{-1}\sum_{k\in \Lambda}\mathbb P_{H}\mathbb P_{\neq 0}\left(h_{(\tau)}\fint_{\mathbb T^3}W_{(k)}\otimes W_{(k)}\,dx\partial_t\nabla a_{(k)}^2 \right)\\
&+\sum_{k\in\Lambda} \nabla a_{(k)}^2g_{(\tau)}^2\mathbb P_{\neq 0}\left(W_{(k)}\otimes W_{(k)}\right)
\end{split}
\end{equation}
where we shift $\nabla\rho$ into the pressure term.

Let $\mathcal R$ be the inverse divergence operator. We can choose 
\begin{equation}\label{stress-concrete}
\begin{split}
R_{\mathrm{lin}}=&\ \mathcal R\partial_tv_{q+1}^p-\mathcal R\Delta v_{q+1}+\mathcal R\nabla\cdot(B_q\otimes w_{q+1}+w_{q+1}\otimes B_q),\\
R_{\mathrm{osc}}=& \sum_{k\in\Lambda} \mathcal R\mathbb P_{H}\mathbb P_{\neq 0}\left(g_{(\tau)}^2\mathbb P_{\neq 0}((W_{(k)}\otimes W_{(k)})\nabla a_{(k)}^2) \right)\\
&-\sigma^{-1}\sum_{k\in\Lambda}\mathcal R\mathbb P_{H}\mathbb P_{\neq 0}\left(h_{(\tau)}\fint_{\mathbb T^3}W_{(k)}\otimes W_{(k)}\, dx\partial_t\nabla a_{(k)}^2 \right)\\
=&\ R_{\mathrm{osc,1}}+R_{\mathrm{osc,2}},\\
R_{\mathrm{cor}}=&\ \mathcal R\mathbb P_H\nabla\cdot(w_{q+1}^c\otimes w_{q+1}+w_{q+1}^p\otimes w_{q+1}^c)
\end{split}
\end{equation}
and 
\begin{equation}\label{stress-final}
R_{q+1}=R_{\mathrm{lin}}+R_{\mathrm{osc,1}}+R_{\mathrm{osc,2}}+R_{\mathrm{cor}}.
\end{equation}

\subsection{Estimates of the new stress error}
\label{sec-error-est}
We start with the estimates for the linear errors. Note $\mathcal R\nabla\times$ is a Calder\'on-Zygmund operator. In view of the definition \eqref{def-per-v} of $v_{q+1}^p$, applying Lemma \ref{le-CZ}, H\"older's inequality, and estimates from Lemma \ref{le-est-phi}, Lemma \ref{le-est-g} and Lemma \ref{le-est-a}, we obtain
\begin{equation}\notag
\begin{split}
\|\mathcal \partial_tv_{q+1}^p\|_{L^1_tL^\eta_x}\lesssim &\sum_{k\in\Lambda}\|\mathcal R\nabla\times (\partial_ta_{(k)}g_{(\tau)}W_{(k)}^c)\|_{L^1_tL^\eta_x}\\
&+\sum_{k\in\Lambda}\|\mathcal R\nabla\times (a_{(k)}\partial_tg_{(\tau)}W_{(k)}^c)\|_{L^1_tL^\eta_x}\\
\lesssim&\sum_{k\in\Lambda}\|a_{(k)}\|_{C^1_{t,x}}\|g_{(\tau)}\|_{L^1_t}\|W_{(k)}^c\|_{C_tL_x^\eta}\\
&+\sum_{k\in\Lambda}\|a_{(k)}\|_{C_{t,x}}\|\partial_tg_{(\tau)}\|_{L^1_t}\|W_{(k)}^c\|_{C_tL_x^\eta}\\
\lesssim&\ \tau^{-\frac12}r^{\frac2\eta-1}\lambda_{q+1}^{-2}+\sigma\tau^{\frac12}r^{\frac2\eta-1}\lambda_{q+1}^{-2}.
\end{split}
\end{equation}
Similarly, by Lemma \ref{le-CZ}, Lemma \ref{le-est-wv-1} and the inductive assumption we have
\begin{equation}\notag
\|\mathcal R(-\Delta)^\alpha v_{q+1}^p\|_{L^1_tL^\eta_x}\lesssim \||\nabla|^{2\alpha-1} v_{q+1}^p\|_{L^1_tL^\eta_x}\lesssim \tau^{-\frac12}r^{\frac2\eta-1}\lambda_{q+1}^{2\alpha-2},
\end{equation}
\begin{equation}\notag
\|\mathcal R(-\Delta)^\alpha v_{q+1}^c\|_{L^1_tL^\eta_x}\lesssim \||\nabla|^{2\alpha-1} v_{q+1}^c\|_{L^1_tL^\eta_x}\lesssim \sigma^{-1},
\end{equation}
and
\begin{equation}\notag
\begin{split}
\|\mathcal R\div (B_q\otimes w_{q+1}+w_{q+1}\otimes B_q)\|_{L^1_tL^\eta_x}\lesssim& \|B_q\otimes w_{q+1}+w_{q+1}\otimes B_q\|_{L^1_tL^\eta_x}\\
\lesssim&\|B_q\|_{L^\infty_{t,x}}\|w_{q+1}\|_{L^1_tL^\eta_x}\\
\lesssim&\ \lambda_q^5 \tau^{-\frac12}r^{\frac2\eta-1}.
\end{split}
\end{equation}

Applying H\"older's inequality, Lemma \ref{le-est-g} and Lemma \ref{le-est-a} to the oscillation errors in \eqref{stress-concrete}, we obtain
\begin{equation}\notag
\begin{split}
\|R_{\mathrm{osc,1}}\|_{L^1_tL^\eta_x}\lesssim&\sum_{k\in\Lambda}\|g_{(\tau)}^2\|_{L^1_t}\||\nabla|^{-1}\mathbb P_{\neq 0}\mathbb P_{\geq \lambda_{q+1}r/2}\left((W_{(k)}\otimes W_{(k)})\nabla a_{(k)}^2 \right)\|_{C_tL^\eta_x}\\
\lesssim&\sum_{k\in\Lambda}\|g_{(\tau)}\|_{L^2_t}^2\lambda_{q+1}^{-1}r^{-1}\|\nabla a_{(k)}\|_{C_{t,x}}\|\phi_{(k)}\|_{L^1_tL^{2\eta}_x}^2\\
\lesssim&\ \lambda_{q+1}^{-1}r^{\frac2\eta-3},
\end{split}
\end{equation}
and
\begin{equation}\notag
\begin{split}
\|R_{\mathrm{osc,2}}\|_{L^1_tL^\eta_x}\lesssim&\ \sigma^{-1}\sum_{k\in\Lambda}\|h_{(\tau)}\|_{C_t}\left( \|a_{(k)}\|_{C_{t,x}}\|\nabla a_{(k)}\|_{C^1_{t,x}}+\| a_{(k)}\|_{C^1_{t,x}}^2\right)\\
\lesssim&\ \sigma^{-1}.
\end{split}
\end{equation}

We estimate the corrector error using the estimates from Lemma \ref{le-est-wv-1}
\begin{equation}\notag
\begin{split}
\|R_{\mathrm{cor}}\|_{L^1_tL^\eta_x}\lesssim & \|w_{q+1}^c\otimes w_{q+1}+w_{q+1}^p\otimes w_{q+1}^c\|_{L^1_tL^\eta_x}\\
\lesssim& \|w_{q+1}^c\|_{L^2_tL^\infty_x}\left(\|w_{q+1}^p\|_{L^2_tL^\eta_x} +\|w_{q+1}\|_{L^2_tL^\eta_x}\right)\\
\lesssim&\ \lambda_{q+1}^{-1}r^{-1}\left(r^{\frac2\eta-1}+\lambda_{q+1}^{-1}r^{\frac2\eta-1} \right)\\
\lesssim&\ \lambda_{q+1}^{-1}r^{\frac2\eta-2}. 
\end{split}
\end{equation}

\subsection{Choice of parameters}
\label{sec-para-1}
To ensure the iterative scheme moving forward, we need to guarantee \eqref{induct-R}, \eqref{iter-2} and \eqref{iter-3}, i.e.
\[\|R_{q+1}\|_{L^1_tL^\eta_x}\leq \delta_{q+2}, \ \ \|w_{q+1}\|_{L^1_t L^2_x}\leq \delta_{q+2}^{\frac12}, \ \ \|w_{q+1}\|_{L^\gamma_t W^{1,p}_x}\leq \delta_{q+2}^{\frac12}. \]
Thus, collecting the estimates in Subsection \ref{sec-error-est} to be applied to \eqref{stress-final}, we impose
\begin{equation}\label{para-cond1}
\begin{split}
C\left(\sigma\tau^{\frac12}r^{\frac2\eta-1}\lambda_{q+1}^{-2}+\tau^{-\frac12}r^{\frac2\eta-1}\lambda_{q+1}^{2\alpha-2} +\lambda_q^5\tau^{-\frac12}r^{\frac2\eta-1}\right.\\
\left.+\lambda_{q+1}^{-1}r^{\frac2\eta-3}+\lambda_{q+1}^{-1}r^{-1}+\sigma^{-1} \right)\leq&\ \delta_{q+2},\\
C\tau^{-\frac12}\leq &\ \delta_{q+2}^{\frac12},\\
Cr^{\frac2p-1}\tau^{\frac12-\frac1\gamma}\lambda_{q+1}\leq &\ \delta_{q+2}^{\frac12}
\end{split}
\end{equation}
for some constant $C>0$. Recall
\[r=\lambda_{q+1}^{n_1}, \ \ \tau=\lambda_{q+1}^{n_2}, \ \ \sigma=\lambda_{q+1}^{2\ve}, \ \ \delta_{q+2}=\lambda_{q+1}^{-2b\beta}.\]
The conditions in \eqref{para-cond1} will be satisfied provided 
\begin{equation}\notag
\begin{split}
(\frac2\eta-1)n_1+\frac12n_2+2\ve-2<&-2b\beta,\\
(\frac2\eta-1)n_1-\frac12n_2+2\alpha-2<&-2b\beta,\\
5/b+(\frac2\eta-1)n_1-\frac12n_2<&-2b\beta,\\
-n_1-1<&-2b\beta,\\
(\frac2\eta-3)n_1-1<&-2b\beta,\\
 -2\ve <&-2b\beta,\\
 -\frac12n_2 <&-b\beta, \\
(\frac2p-1)n_1+(\frac12-\frac1\gamma)n_2+1<&-b\beta.
\end{split}
\end{equation}
Since $\eta$ can be chosen as close as to $1$, we take $\eta=1$ in the conditions for brevity and hence obtain
\begin{equation}\label{para-1}
\begin{split}
n_1+\frac12n_2+2\ve-2<&-2b\beta,\\
n_1-\frac12n_2+2\alpha-2<&-2b\beta,\\
5/b+n_1-\frac12n_2<&-2b\beta,\\
-n_1-1<&-2b\beta,\\
-2\ve <&-2b\beta,\\
 -\frac12n_2 <&-b\beta, \\
(\frac2p-1)n_1+(\frac12-\frac1\gamma)n_2+1<&-b\beta.
\end{split}
\end{equation}
Note that the fifth condition of (\ref{para-1}) implies $\ve>b\beta$. As discussed in the heuristic analysis in Subsection \ref{sec-heuristics}, we do not need much spatial intermittency (corresponding to spatial concentration). Hence we choose $n_1=-3\ve$. We also take large enough $b>0$.
For the first two conditions of \eqref{para-1} to be compatible, we impose $\alpha<2+2\ve-2b\beta$. In the end, we take $n_2=(4+2\varepsilon-4b\beta)-\epsilon_0$ with arbitrarily small $\epsilon_0>0$ such that the first six conditions of (\ref{para-1}) are all valid.

Taking $p=\infty$ in the last inequality of (\ref{para-1}) yields
\[\frac1\gamma>\frac12+\frac{1+3\ve+b\beta}{n_2}=\frac12+\frac{1+3\ve+b\beta}{(4+2\varepsilon-4b\beta)-\epsilon_0}>\frac34\]
for sufficiently small $\ve>b\beta>0$. Recall the critical space with $p=\infty$ corresponds to $\frac1\gamma= \frac{2\alpha-1}{2\alpha}$. We observe that 
\[\frac{2\alpha-1}{2\alpha}=\frac34 \ \ \mbox{for} \ \ \alpha=2.\]
Hence we note that the scheme gives non-unique weak solutions in the boarder line space $L_t^\gamma W^{1,\infty}_x$ for $\gamma<\frac43$ for the hyper-resistive equation (\ref{emhd}) with $\alpha=2$.

To summarize, with the new stress error $R_{q+1}$ defined through \eqref{stress-concrete}-\eqref{stress-final} and appealing to \eqref{R-q1-initial}, it is apparent that the triplet $(A_{q+1}, B_{q+1}, R_{q+1})$ is a solution of \eqref{emhd-q} at the $(q+1)$-th level. The estimate \eqref{iter-1} follows from \eqref{est-w-2} and the inductive assumption \eqref{induct-R}. In Subsection \ref{sec-para-1}, the analysis shows that the estimates \eqref{iter-2} and \eqref{iter-3} are satisfied under appropriate choice of parameters; it also proves \eqref{induct-R} with $q$ replaced by $(q+1)$. The estimate \eqref{induct-B} (and \eqref{induct-A}) with $q$ replaced by $(q+1)$ is obvious by noticing that
\begin{equation}\notag
\|B_{q+1}\|_{L^\infty_tH^2_x}\leq \|B_{q}\|_{L^\infty_tH^2_x}+\|w_{q+1}\|_{L^\infty_tH^2_x}\lesssim \lambda_q^5+\lambda_{q+1}^2\tau^{\frac12}\lesssim \lambda_{q+1}^5.
\end{equation}
Regarding \eqref{iter-supp}, we have
\begin{equation}\notag
\begin{split}
\supp_t (A_{q+1},B_{q+1}, \mathring R_{q+1})\subset&\ \supp_t (A_{q},B_{q}, \mathring R_{q})\cup \supp_t (w_{q+1})\\
\subset&\ \supp_t (A_{q},B_{q}, \mathring R_{q})\cup (\cup_{k\in\Lambda} \supp_t a_{(k)})\\
\subset&\ \supp_t (A_{q},B_{q}, \mathring R_{q})\cup N_{\delta_{q+2}^{\frac12}}(\supp_t \mathring R_{q})\\
\subset&\ N_{\delta_{q+2}^{\frac12}}(\supp_t (A_{q},B_{q}, \mathring R_{q})).
\end{split}
\end{equation}
The proof of Proposition \ref{prop} in the case of $L^\gamma_tW^{1,\infty}_x$ for $\gamma<\frac43$ and $\alpha\in[1,2]$ is complete.


\begin{rem}\label{rem-case1}
If we consider the electron MHD \eqref{emhd} without resistivity $(-\Delta)^\alpha B$, the condition 
\[n_1-\frac12n_2+2\alpha-2<-2b\beta\]
from \eqref{para-1} is not needed. Without considering this condition, the rest analysis remains the same except that we do not impose $\alpha<2+2\ve-2b\beta$ anymore. 
\end{rem}

\bigskip

\section{Proof of the iteration proposition of case II: boarder line space $L_t^\infty W_x^{1,p}$}
\label{sec-case-2}

\subsection{Building blocks}
\label{sec-blocks-2}
In this case, we need to construct solutions in $L_t^\infty W_x^{1,p}$ where spatial integrability is weaker, the analysis in Subsection \ref{sec-heuristics} suggests adapting intermittent jets as our building blocks. Beside the concentration in the orthogonal plane of a direction $k\in \Lambda$, we also need to introduce concentration and temporal oscillation in the parallel direction of $k$. Let $\ell$ and $\mu$ denote the concentration in the direction of $k$ and temporal oscillation respectively. We choose a smooth and mean-zero function $\psi:\mathbb R\to \mathbb R$ satisfying
\begin{equation}\label{norm-psi}
\frac1{2\pi}\int_{\mathbb R}\psi^2(x)\, dx=1, \ \ \ \supp \psi\subset [-1,1].
\end{equation}
As before, we periodize the rescaled function
\[\psi_{\ell}(x)=\ell^{-\frac12}\psi(\frac{x}{\ell})\]
and treat it as a periodic function on $\mathbb T$. Denote 
\[\psi_{(k)}(x,t)=\psi_{\ell}(\lambda_{q+1} rN_\Lambda (k\cdot x+\mu t)).\]
We then define the intermittent jets as 
\begin{equation}\label{W-2}
W_{(k)}(x,t)=\psi_{(k)}\phi_{(k)}k, \ \ k\in \Lambda
\end{equation}
with $\phi_{(k)}$ from Section \ref{sec-case-1}. We also define 
\[W_{(k)}^c=\frac{1}{\lambda_{q+1}^2N_{\Lambda}^2}\psi_{(k)}\Phi_{(k)}k.\]
We observe
\[\nabla\times \nabla\times W_{(k)}^c=W_{(k)}+\widetilde W_{(k)}\]
with 
\[\widetilde W_{(k)}=\frac{1}{\lambda_{q+1}^2N_{\Lambda}^2}\nabla\psi_{(k)}\times (\nabla\times (\Phi_{(k)}k)).\]
It is obvious that 
\[\div (W_{(k)}+\widetilde W_{(k)})=0.\]

\begin{lem}\label{le-est-psi}
For $N, M\in\mathbb N$ and $p\in[1,\infty]$ we have
\begin{equation}\notag
\begin{split}
\|\nabla^N\partial_t^M\psi_{(k)}\|_{C_tL^p_x}\lesssim \ell^{\frac1p-\frac12}\left(\frac{\lambda_{q+1}r}{\ell}\right)^N\left(\frac{\lambda_{q+1}r\mu}{\ell}\right)^M,\\
\|\nabla^N\partial_t^MW_{(k)}\|_{C_tL^p_x}+\frac{\ell}{r}\|\nabla^N\partial_t^M\widetilde W_{(k)}\|_{C_tL^p_x}+\lambda_{q+1}^2\|\nabla^N\partial_t^MW^c_{(k)}\|_{C_tL^p_x}\lesssim  r^{\frac2p-1}\ell^{\frac1p-\frac12}\lambda_{q+1}^N\left(\frac{\lambda_{q+1}r\mu}{\ell}\right)^M
\end{split}
\end{equation}
with implicit constants dependent of $N_\Lambda$ and independent of $r, \ell, \mu$ and $\lambda_{q+1}$.
\end{lem}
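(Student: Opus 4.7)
The plan is to reduce everything to a one-dimensional rescaling estimate for the profile $\psi$, combined with the Mikado estimates of Lemma \ref{le-est-phi} and a change of variables exploiting the orthogonality of the frame $\{k,k_1,k_2\}$. First I would observe that because $\lambda_{q+1}rN_\Lambda\in\mathbb{N}$ and $N_\Lambda k\in\mathbb{Z}^3$, the composition $\psi_{(k)}(\cdot,t)=\psi_\ell(\lambda_{q+1}rN_\Lambda(k\cdot x+\mu t))$ is $\mathbb{T}^3$-periodic in $x$. Introducing the orthogonal coordinate $y_1=k\cdot x$ and using that $\psi_\ell$ is $\mathbb{T}$-periodic with
\[
\|\psi_\ell\|_{L^p(\mathbb{T})}\lesssim \ell^{\frac1p-\frac12},
\]
(which follows from $\psi_\ell(y)=\ell^{-1/2}\psi(y/\ell)$ and $\supp \psi\subset[-1,1]$), the change of variables reduces $\|\psi_{(k)}\|_{L^p_x}$ to $\|\psi_\ell\|_{L^p(\mathbb{T})}$. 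The chain rule then produces a factor $\lambda_{q+1}rN_\Lambda$ for each spatial derivative and $\lambda_{q+1}rN_\Lambda\mu$ for each time derivative, while each derivative of $\psi_\ell$ contributes an extra $\ell^{-1}$. Iterating yields the first estimate.

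Next I would establish the bound on $W_{(k)}=\psi_{(k)}\phi_{(k)}k$. Since $\psi_{(k)}$ depends only on $k\cdot x$ (and $t$) while $\phi_{(k)}$ depends only on $(k_1\cdot x, k_2\cdot x)$, an orthogonal change of variables factorises the $L^p_x$ norm as
\[
\|\psi_{(k)}\phi_{(k)}\|_{L^p_x}=\|\psi_{(k)}\|_{L^p(dy_1)}\,\|\phi_{(k)}\|_{L^p(dy_2\,dy_3)}\lesssim \ell^{\frac1p-\frac12}\,r^{\frac2p-1},
\]
by the one-dimensional bound just obtained and Lemma \ref{le-est-phi}. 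The derivative estimate follows by Leibniz: a spatial derivative is either $\lambda_{q+1}r/\ell$ (acting on $\psi_{(k)}$) or $\lambda_{q+1}$ (acting on $\phi_{(k)}$), so under the standing parameter choice $r\lesssim\ell$ in the construction, the dominant factor is $\lambda_{q+1}^N$. Time derivatives can only hit $\psi_{(k)}$ and each yields $\lambda_{q+1}r\mu/\ell$, as required.

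Finally, $W^c_{(k)}=(\lambda_{q+1}^2N_\Lambda^2)^{-1}\psi_{(k)}\Phi_{(k)}k$ satisfies the claimed bound by the same argument once one notes $\|\nabla^N\Phi_{(k)}\|_{L^p_x}\lesssim r^{\frac2p-1}\lambda_{q+1}^N$ from Lemma \ref{le-est-phi}; and $\widetilde W_{(k)}=(\lambda_{q+1}^2N_\Lambda^2)^{-1}\nabla\psi_{(k)}\times(\nabla\times(\Phi_{(k)}k))$ differs from $W_{(k)}$ by one extra derivative on $\psi_{(k)}$, which produces exactly the prefactor $r/\ell$ relative to $W_{(k)}$, matching the weight $\ell/r$ on the left-hand side of the stated inequality. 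The main technical point I anticipate is the Leibniz bookkeeping: one must verify that in the regime $r\le\ell$ every term where a derivative falls on $\psi_{(k)}$ is dominated by the corresponding term with the derivative on $\phi_{(k)}$, so that the single factor $\lambda_{q+1}^N$ suffices. Once this is checked the remainder is a mechanical combination of the one-dimensional estimate and Lemma \ref{le-est-phi}.
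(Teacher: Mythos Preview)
Your proposal is correct and follows exactly the standard approach; indeed the paper itself omits the proof of this lemma entirely, stating only that ``the proof of the lemma is standard and thus omitted.'' Your sketch supplies precisely the expected argument: the one-dimensional rescaling bound for $\psi_\ell$, the factorisation of $L^p_x$ norms via the orthogonal change of variables $y=(k\cdot x,\,k_1\cdot x,\,k_2\cdot x)$, and the Leibniz bookkeeping with the observation that $r\le\ell$ (which is built into the parameter choice $n_1<n_2$ in Section~\ref{sec-case-2}) forces the $\psi$-derivative cost $\lambda_{q+1}r/\ell$ to be dominated by the $\phi$-derivative cost $\lambda_{q+1}$.
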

The proof of the lemma is standard and thus omitted. 

We define the temporal building blocks $g_{(\tau)}$, $h_{(\tau)}$ and amplitude functions $a_{(k)}$ as in Section \ref{sec-case-1}. 

\subsection{Perturbations of magnetic and potential fields}
As before we first define the principal perturbation for the vector potential
\begin{equation}\label{def-per-v}
v_{q+1}^{p}=\sum_{k\in\Lambda}\nabla\times (a_{(k)}g_{(\tau)}W_{(k)}^c)
\end{equation}
and the associated perturbation for the magnetic field
\begin{equation}\label{def-per-w}
\begin{split}
\nabla\times v_{q+1}^{p}=&\sum_{k\in\Lambda}\nabla\times\nabla\times (a_{(k)}g_{(\tau)}W_{(k)}^c)\\
=& \sum_{k\in\Lambda}a_{(k)}g_{(\tau)}W_{(k)}\\
&+\sum_{k\in \Lambda}g_{(\tau)}\left(\nabla a_{(k)}\times (\nabla\times W_{(k)}^c)+\nabla\times (\nabla a_{(k)}\times W_{(k)}^c)+a_{(k)}\widetilde W_{(k)}\right)\\
=&: w_{q+1}^{p}+w_{q+1}^{c}.
\end{split}
\end{equation}
Again it is obvious that
\[\nabla\cdot v_{q+1}^{p}=0, \ \ \nabla\cdot (w_{q+1}^{p}+w_{q+1}^{c})=0.\]
Beside the temporal corrector for the magnetic vector potential 
\begin{equation}\notag
v_{q+1}^{c}=-\sigma^{-1}\sum_{k\in \Lambda}\mathbb P_{H}\mathbb P_{\neq 0}\left(h_{(\tau)}\fint_{\mathbb T^3}W_{(k)}\otimes W_{(k)}\,dx\nabla a_{(k)}^2 \right)
\end{equation}
introduced in Section \ref{sec-case-1} to cancel a high temporal oscillation in the interactions, we need to add one more temporal corrector to reduce the term containing $\div(W_{(k)}\otimes W_{(k)})$ which does not vanish for the $W_{(k)}$ defined in (\ref{W-2}). In particular, we define the second temporal corrector as
\begin{equation}\label{v-t}
v_{q+1}^t=-\mu^{-1}\sum_{k\in \Lambda}\mathbb P_H\mathbb P_{\neq 0}\left( a_{(k)}^2g_{(\tau)}^2\psi_{(k)}^2\phi_{(k)}^2k\right).
\end{equation}
It follows that
\begin{equation}\label{v-t-eq}
\begin{split}
&\partial_t v_{q+1}^{t}+\sum_{k\in \Lambda}\mathbb P_{\neq 0}\left( a_{(k)}^2g_{(\tau)}^2\div(W_{(k)}\otimes W_{(k)})\right)\\
=&\ \mu^{-1}\nabla \Delta^{-1}\div \sum_{k\in \Lambda}\mathbb P_{\neq 0}\partial_t\left( a_{(k)}^2g_{(\tau)}^2\psi_{(k)}^2\phi_{(k)}^2k\right)\\
&-\mu^{-1}\sum_{k\in \Lambda}\mathbb P_{\neq 0}\left( \partial_t(a_{(k)}^2g_{(\tau)}^2)\psi_{(k)}^2\phi_{(k)}^2k\right).
\end{split}
\end{equation}
Naturally we define one more corrector 
\[w_{q+1}^t=\nabla\times v_{q+1}^t\]
for the magnetic field.

In summary, the total perturbations of the magnetic field and its vector potential are defined as
\begin{equation}\label{def-per-wv}
w_{q+1}=w_{q+1}^{p}+w_{q+1}^{c}+w_{q+1}^{t}, \ \ v_{q+1}=v_{q+1}^{p}+v_{q+1}^{c}+v_{q+1}^{t}.
\end{equation}
It is clear to see $\nabla\times v_{q+1}^c=0$ and hence $w_{q+1}=\nabla\times v_{q+1}$. We then define the new magnetic field and its vector potential as
\[B_{q+1}=B_q+w_{q+1}, \ \ A_{q+1}=A_q+v_{q+1}.\]

\begin{lem}\label{le-est-wv}
For $\gamma\in[1,\infty]$, $\eta\in(1,\infty)$ and $0\leq N\leq 8$, the estimates
\begin{equation}\label{est-w-p}
\|\nabla^N w_{q+1}^p\|_{L_t^\gamma L_x^\eta}\lesssim \lambda_{q+1}^Nr^{\frac2\eta-1}\ell^{\frac1\eta-\frac12}\tau^{\frac12-\frac1\gamma},
\end{equation}
\begin{equation}\label{est-w-c}
\|\nabla^N w_{q+1}^c\|_{L_t^\gamma L_x^\eta}\lesssim \lambda_{q+1}^{N}r^{\frac2\eta}\ell^{\frac1\eta-\frac32}\tau^{\frac12-\frac1\gamma},
\end{equation}
\begin{equation}\label{est-w-t}
\|\nabla^N w_{q+1}^t\|_{L_t^\gamma L_x^\eta}\lesssim \mu^{-1}\lambda_{q+1}^{N+1}r^{\frac2\eta-2}\ell^{\frac1\eta-1}\tau^{1-\frac1\gamma},
\end{equation}
\begin{equation}\label{est-v-p}
\|\nabla^N v_{q+1}^p\|_{L_t^\gamma L_x^\eta}\lesssim \lambda_{q+1}^{N-1}r^{\frac2\eta-1}\ell^{\frac1\eta-\frac12}\tau^{\frac12-\frac1\gamma},
\end{equation}
\begin{equation}\label{est-v-c}
\|\nabla^N v_{q+1}^c\|_{L_t^\gamma L_x^\eta}\lesssim \sigma^{-1},
\end{equation}
\begin{equation}\label{est-v-p}
\|\nabla^N v_{q+1}^t\|_{L_t^\gamma L_x^\eta}\lesssim \mu^{-1}\lambda_{q+1}^{N}r^{\frac2\eta-2}\ell^{\frac1\eta-1}\tau^{1-\frac1\gamma}
\end{equation}
hold with implicit constants depending only on $N$, $\gamma$ and $\eta$. 
\end{lem}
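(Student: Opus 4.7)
The plan is to prove each of the six estimates by expanding the expression with the Leibniz rule, decoupling time and space by Hölder's inequality, and then invoking Lemma \ref{le-est-g}, Lemma \ref{le-est-a}, and Lemma \ref{le-est-psi}. In every case the temporal integration in $L^\gamma_t$ is carried by $g_{(\tau)}$ (or $g_{(\tau)}^2$ for the second corrector), contributing $\tau^{1/2-1/\gamma}$ (respectively $\tau^{1-1/\gamma}$), while the amplitude $a_{(k)}$ contributes only an $O(1)$ factor thanks to $\|a_{(k)}\|_{C^N_{t,x}}\lesssim 1$. The key space-side observation is that $\psi_{(k)}$ depends only on the one-dimensional coordinate $k\cdot x+\mu t$ and $\phi_{(k)}$ depends only on the orthogonal plane spanned by $k_1,k_2$, so $L^\eta$-norms of products $\psi_{(k)}^a\phi_{(k)}^b$ factor as a one-dimensional norm times a two-dimensional norm, and the per-factor bounds follow from Lemma \ref{le-est-psi}.

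For $w_{q+1}^p$ and $v_{q+1}^p$ one just distributes derivatives through $a_{(k)}g_{(\tau)}W_{(k)}$ (respectively $a_{(k)}g_{(\tau)}W_{(k)}^c$) and reads off the bounds from Lemma \ref{le-est-psi}; the extra $\lambda_{q+1}^{-1}$ in \eqref{est-v-p} reflects the fact that $W_{(k)}^c$ is one derivative smoother than $W_{(k)}$ and only one curl is taken. For $w_{q+1}^c$, the dominant summand is $g_{(\tau)}a_{(k)}\widetilde W_{(k)}$: the extra $\ell/r$ factor in the $\widetilde W_{(k)}$-bound of Lemma \ref{le-est-psi} converts the expected $r^{2/\eta-1}\ell^{1/\eta-1/2}$ into the $r^{2/\eta}\ell^{1/\eta-3/2}$ appearing in \eqref{est-w-c}; the two remaining summands carry the $\lambda_{q+1}^{-2}$ prefactor of $W_{(k)}^c$ and are genuinely subdominant, so power counting survives despite the extra $\nabla a_{(k)}$ (which costs only $O(1)$).

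For $v_{q+1}^c$ the matrix $\fint_{\mathbb T^3}W_{(k)}\otimes W_{(k)}\,dx$ is a deterministic $O(1)$ quantity by the normalizations \eqref{norm-psi} and the normalization of $\phi$; combining this with $\|h_{(\tau)}\|_{C_t}\leq 1$, $\|\nabla a_{(k)}^2\|_{C_{t,x}}\lesssim 1$, and the $L^\eta$-boundedness of $\mathbb P_H\mathbb P_{\neq 0}$ for $\eta\in(1,\infty)$ (Calderón–Zygmund theory, which explains the $\eta\in(1,\infty)$ restriction) immediately gives the $\sigma^{-1}$ bound. For $v_{q+1}^t$ the same boundedness of $\mathbb P_H\mathbb P_{\neq 0}$ reduces the estimate to controlling $\|\nabla^N(a_{(k)}^2 g_{(\tau)}^2\psi_{(k)}^2\phi_{(k)}^2)\|_{L^\gamma_t L^\eta_x}$, which the factorization above together with $\|\psi_{(k)}^2\|_{L^\eta}\lesssim \ell^{1/\eta-1}$ and $\|\phi_{(k)}^2\|_{L^\eta}\lesssim r^{2/\eta-2}$ yields in the stated form, after placing the $N$ derivatives on $\phi_{(k)}^2$ at cost $\lambda_{q+1}^N$ each.

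The subtle step is $w_{q+1}^t=\nabla\times v_{q+1}^t$: here the decisive cancellation is that $\nabla\psi_{(k)}^2$ is parallel to $k$, so $\nabla\times(\psi_{(k)}^2\phi_{(k)}^2 k) = \psi_{(k)}^2(\nabla\phi_{(k)}^2)\times k$, and hence only derivatives of $\phi_{(k)}^2$ are activated — each costing only $\lambda_{q+1}$ rather than the much larger $\lambda_{q+1}r/\ell$ one would get if $\psi_{(k)}^2$ differentiated. This is precisely what yields the $\lambda_{q+1}^{N+1}$ factor in \eqref{est-w-t} instead of a worse power of $\ell^{-1}$; the $\mu^{-1}$ is inherited from the definition of $v_{q+1}^t$ and the $\tau^{1-1/\gamma}$ from $\|g_{(\tau)}^2\|_{L^\gamma_t}\lesssim\tau^{1-1/\gamma}$. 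I expect the main difficulty to be exactly this bookkeeping of the $r/\ell$ versus $\lambda_{q+1}$ costs in the temporal corrector, since the whole utility of the $v_{q+1}^t$ correction (cf.\ \eqref{v-t-eq}) depends on this term being small enough in $L^\gamma_tL^\eta_x$ to be absorbed later in Section \ref{sec-case-2}'s analogue of Subsection \ref{sec-para-1}.
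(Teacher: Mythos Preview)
Your outline is correct and matches the paper's proof almost step for step: Leibniz rule, H\"older in time and space, then Lemmas~\ref{le-est-g}, \ref{le-est-a}, \ref{le-est-psi}, together with the factorization of $\psi_{(k)}\phi_{(k)}$-norms coming from their dependence on orthogonal coordinates. The identifications of the dominant term in $w_{q+1}^c$ and of the $\sigma^{-1}$ bound for $v_{q+1}^c$ via Calder\'on--Zygmund are exactly what the paper does.

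One point to correct: your treatment of $w_{q+1}^t$ rests on a sign error in the hierarchy of scales. Since $n_1<n_2<0$ we have $r<\ell$, so the derivative cost $\lambda_{q+1}r/\ell$ on $\psi_{(k)}$ is \emph{smaller} than $\lambda_{q+1}$, not ``much larger''. Consequently no cancellation is needed: the paper simply observes that every spatial derivative landing on any factor of $a_{(k)}^2\psi_{(k)}^2\phi_{(k)}^2$ costs at most $\lambda_{q+1}$, and therefore
\[
\|\nabla^N w_{q+1}^t\|_{L_t^\gamma L_x^\eta}\lesssim \lambda_{q+1}\|\nabla^N v_{q+1}^t\|_{L_t^\gamma L_x^\eta}
\]
directly. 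Your curl identity $\nabla\times(\psi_{(k)}^2\phi_{(k)}^2 k)=\psi_{(k)}^2(\nabla\phi_{(k)}^2)\times k$ is correct and would also give the bound, but it is an unnecessary detour here.
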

\begin{proof}
By H\"older's inequality, Lemma \ref{le-est-g}, Lemma \ref{le-est-a} and Lemma \ref{le-est-psi} we have
\begin{equation}\notag
\begin{split}
\|\nabla^Nw_{q+1}^p\|_{L^\gamma_t L^\eta_x}\lesssim&\sum_{k\in \Lambda}\sum_{N_1+N_2=N}\|a_{(k)}\|_{C^{N_1}_{t,x}}\|g_{(\tau)}\|_{L^\gamma_t}\|\nabla^{N_2}W_{(k)}\|_{L^\infty_tL^\eta_x}\\
\lesssim&\sum_{N_1+N_2=N}\tau^{\frac12-\frac1\gamma}\lambda_{q+1}^{N_2}r^{\frac2\eta-1}\ell^{\frac1\eta-\frac12}\\
\lesssim&\ \lambda_{q+1}^Nr^{\frac2\eta-1}\ell^{\frac1\eta-\frac12}\tau^{\frac12-\frac1\gamma}.
\end{split}
\end{equation}
Similarly, it follows from \eqref{def-per-v}, Lemma \ref{le-est-g}, Lemma \ref{le-est-a} and Lemma \ref{le-est-psi} that
\begin{equation}\notag
\begin{split}
\|\nabla^Nv_{q+1}^p\|_{L^\gamma_t L^\eta_x}\lesssim&\sum_{k\in \Lambda}\sum_{N_1+N_2=N+1}\|a_{(k)}\|_{C^{N_1}_{t,x}}\|g_{(\tau)}\|_{L^\gamma_t}\|\nabla^{N_2}W_{(k)}^c\|_{L^\infty_tL^\eta_x}\\
\lesssim&\sum_{N_1+N_2=N+1}\tau^{\frac12-\frac1\gamma}\lambda_{q+1}^{N_2-2}r^{\frac2\eta-1}\ell^{\frac1\eta-\frac12}\\
\lesssim&\ \lambda_{q+1}^{N-1}r^{\frac2\eta-1}\ell^{\frac1\eta-\frac12}\tau^{\frac12-\frac1\gamma}.
\end{split}
\end{equation}
Applying Lemma \ref{le-est-g}, Lemma \ref{le-est-a} and Lemma \ref{le-est-psi} to $w_{q+1}^c$ as defined in \eqref{def-per-w} yields
\begin{equation}\notag
\begin{split}
\|\nabla^Nw_{q+1}^c\|_{L^\gamma_t L^\eta_x}\lesssim&\sum_{k\in \Lambda}\sum_{N_1+N_2=N}\|a_{(k)}\|_{C^{N_1}_{t,x}}\|g_{(\tau)}\|_{L^\gamma_t}\\
&\cdot\left(\|\nabla^{N_2}W_{(k)}^c\|_{L^\infty_tL^\eta_x} +\|\nabla^{N_2}\nabla W_{(k)}^c\|_{L^\infty_tL^\eta_x}+\|\nabla^{N_2}\widetilde W_{(k)}\|_{L^\infty_tL^\eta_x}\right)\\
\lesssim&\sum_{N_1+N_2=N}\tau^{\frac12-\frac1\gamma}\left(\lambda_{q+1}^{N_2-2}r^{\frac2\eta-1}\ell^{\frac1\eta-\frac12}+\lambda_{q+1}^{N_2-1}r^{\frac2\eta-1}\ell^{\frac1\eta-\frac12}+r\ell^{-1}r^{\frac2\eta-1}\ell^{\frac1\eta-\frac12}\lambda_{q+1}^{N_2} \right)\\
\lesssim&\ \lambda_{q+1}^{N}r^{\frac2\eta}\ell^{\frac1\eta-\frac32}\tau^{\frac12-\frac1\gamma}.
\end{split}
\end{equation}
For $v_{q+1}^t$ defined in \eqref{v-t}, we have from Lemma \ref{le-est-g}, Lemma \ref{le-est-a}, Lemma \ref{le-est-phi} and Lemma \ref{le-est-psi}
\begin{equation}\notag
\begin{split}
\|\nabla^N v_{q+1}^t\|_{L_t^\gamma L_x^\eta}\lesssim &\ \mu^{-1}\sum_{k\in\Lambda}\|g_{(\tau)}^2\|_{L^\gamma_t}\sum_{N_1+N_2+N_3=N}\|\nabla^{N_1}a_{(k)}^2\|_{C_{t,x}}\|\nabla^{N_2}\psi_{(k)}^2\|_{C_tL_x^\eta}\|\nabla^{N_3}\phi_{(k)}^2\|_{C_tL_x^\eta}\\
\lesssim &\ \mu^{-1}\sum_{k\in\Lambda}\|g_{(\tau)}\|_{L^{2\gamma}_t}^2\sum_{N_1+N_2+N_3=N}\lambda_{q+1}^{N_2+N_3}\|\nabla^{N_1}a_{(k)}^2\|_{C_{t,x}}\|\psi_{(k)}\|_{C_tL_x^{2\eta}}^2\|\phi_{(k)}\|_{C_tL_x^{2\eta}}^2\\
\lesssim&\ \mu^{-1}\tau^{1-\frac1\gamma}\lambda_{q+1}^Nr^{\frac2\eta-2}\ell^{\frac1\eta-1},
\end{split}
\end{equation}
and
\begin{equation}\notag
\|\nabla^N w_{q+1}^t\|_{L_t^\gamma L_x^\eta}\lesssim  \lambda_{q+1}\|\nabla^N v_{q+1}^t\|_{L_t^\gamma L_x^\eta}\lesssim \mu^{-1}\tau^{1-\frac1\gamma}\lambda_{q+1}^{N+1}r^{\frac2\eta-2}\ell^{\frac1\eta-1}.
\end{equation}

The estimate \eqref{est-v-c} follows immediately from the definition of $v_{q+1}^c$ and the estimates from Lemma \ref{le-est-g} and Lemma \ref{le-est-a}.


\end{proof}


\medskip

\subsection{New stress tensor}
The new stress error $R_{q+1}$ such that $(A_{q+1}, B_{q+1}, R_{q+1})$ solves system (\ref{emhd-q}) at the $(q+1)$-th level satisfies
\begin{equation}\notag
\begin{split}
\nabla\cdot R_{q+1}-\nabla P_{q+1}=&\ \partial_tv_{q+1}^p+(-\Delta)^\alpha v_{q+1}+\nabla\cdot(B_q\otimes w_{q+1}+w_{q+1}\otimes B_q)\\
&+\nabla\cdot(w_{q+1}^p\otimes w_{q+1}^p+R_q)+\partial_t v_{q+1}^c+\partial_t v_{q+1}^t\\
&+\nabla\cdot((w_{q+1}^c+w_{q+1}^t)\otimes w_{q+1}+w_{q+1}^p\otimes (w_{q+1}^c+w_{q+1}^t))\\
=&:\nabla\cdot R_{\mathrm{lin}}+\nabla\cdot R_{\mathrm{osc}}+\nabla\cdot R_{\mathrm{cor}}.
\end{split}
\end{equation}
Exploiting the cancellations in the oscillation term gives
\begin{equation}\notag
\begin{split}
&\nabla\cdot(w_{q+1}^p\otimes w_{q+1}^p+R_q)+\partial_t v_{q+1}^c+\partial_t v_{q+1}^t\\
=&\nabla\cdot\sum_{k\in\Lambda} a_{(k)}^2\fint_{\mathbb T^3} W_{(k)}\otimes W_{(k)}\, dx+\nabla\cdot R_q\\
&+\nabla\cdot\sum_{k\in\Lambda} a_{(k)}^2(g_{(\tau)}^2-1)\fint_{\mathbb T^3} W_{(k)}\otimes W_{(k)}\, dx+\partial_t v_{q+1}^c\\
&+\nabla\cdot\sum_{k\in\Lambda} a_{(k)}^2g_{(\tau)}^2\mathbb P_{\neq 0}\left(W_{(k)}\otimes W_{(k)}\right)+\partial_t v_{q+1}^t\\
=&:\mathcal O_1+\mathcal O_2+\mathcal O_3.
\end{split}
\end{equation}
Similar analysis as in Section \ref{sec-case-1} gives 
\begin{equation}\notag
\begin{split}
\mathcal O_1=&\ \nabla \rho,\\
\mathcal O_2=&-\sigma^{-1}\sum_{k\in \Lambda}\mathbb P_{H}\mathbb P_{\neq 0}\left(h_{(\tau)}\fint_{\mathbb T^3}W_{(k)}\otimes W_{(k)}\,dx\partial_t\nabla a_{(k)}^2 \right).
\end{split}
\end{equation}
In view of the definition of $v_{q+1}^t$ in (\ref{v-t}) and (\ref{v-t-eq}) we obtain
\begin{equation}\notag
\begin{split}
\mathcal O_3=&\sum_{k\in\Lambda} \nabla a_{(k)}^2g_{(\tau)}^2\mathbb P_{\neq 0}\left(W_{(k)}\otimes W_{(k)}\right)
+\sum_{k\in\Lambda}  a_{(k)}^2g_{(\tau)}^2\mathbb P_{\neq 0}\div\left(W_{(k)}\otimes W_{(k)}\right)+\partial_tv_{q+1}^t\\
=&\sum_{k\in\Lambda} \nabla a_{(k)}^2g_{(\tau)}^2\mathbb P_{\neq 0}\left(W_{(k)}\otimes W_{(k)}\right)
+ \mu^{-1}\nabla \Delta^{-1}\div \sum_{k\in \Lambda}\mathbb P_{\neq 0}\partial_t\left( a_{(k)}^2g_{(\tau)}^2\psi_{(k)}^2\phi_{(k)}^2k\right)\\
&-\mu^{-1}\sum_{k\in \Lambda}\mathbb P_{\neq 0}\left( \partial_t(a_{(k)}^2g_{(\tau)}^2)\psi_{(k)}^2\phi_{(k)}^2k\right).
\end{split}
\end{equation}
Note that $\mathcal O_1$ and the second term in $\mathcal O_3$ are in gradient form and can be put in the pressure term $\nabla P_{q+1}$. Therefore we have
\begin{equation}\notag
\begin{split}
\nabla\cdot R_{\mathrm{osc}}=&-\sigma^{-1}\sum_{k\in \Lambda}\mathbb P_{H}\mathbb P_{\neq 0}\left(h_{(\tau)}\fint_{\mathbb T^3}W_{(k)}\otimes W_{(k)}\,dx\partial_t\nabla a_{(k)}^2 \right)\\
&+\sum_{k\in\Lambda} \nabla a_{(k)}^2g_{(\tau)}^2\mathbb P_{\neq 0}\left(W_{(k)}\otimes W_{(k)}\right) -\mu^{-1}\sum_{k\in \Lambda}\mathbb P_{\neq 0}\left( \partial_t(a_{(k)}^2g_{(\tau)}^2)\psi_{(k)}^2\phi_{(k)}^2k\right).
\end{split}
\end{equation}
It follows that we can choose 
\begin{equation}\notag
\begin{split}
R_{\mathrm{lin}}=&\ \mathcal R\partial_tv_{q+1}^p+\mathcal R(-\Delta)^\alpha v_{q+1}+\mathcal R\nabla\cdot(B_q\otimes w_{q+1}+w_{q+1}\otimes B_q),\\
R_{\mathrm{osc}}=& \sum_{k\in\Lambda} \mathcal R\mathbb P_{H}\mathbb P_{\neq 0}\left(g_{(\tau)}^2\mathbb P_{\neq 0}((W_{(k)}\otimes W_{(k)})\nabla a_{(k)}^2) \right)\\
&-\sigma^{-1}\sum_{k\in\Lambda}\mathcal R\mathbb P_{H}\mathbb P_{\neq 0}\left(h_{(\tau)}\fint_{\mathbb T^3}W_{(k)}\otimes W_{(k)}\, dx\partial_t\nabla a_{(k)}^2 \right)\\
&-\mu^{-1}\sum_{k\in \Lambda}\mathcal R\mathbb P_{H}\mathbb P_{\neq 0}\left( \partial_t(a_{(k)}^2g_{(\tau)}^2)\psi_{(k)}^2\phi_{(k)}^2k\right)\\
=&\ R_{\mathrm{osc,1}}+R_{\mathrm{osc,2}}+R_{\mathrm{osc,3}},\\
R_{\mathrm{cor}}=&\ \mathcal R\mathbb P_H\nabla\cdot((w_{q+1}^c+w_{q+1}^t)\otimes w_{q+1}+w_{q+1}^p\otimes (w_{q+1}^c+w_{q+1}^t))
\end{split}
\end{equation}
and 
\[R_{q+1}=R_{\mathrm{lin}}+R_{\mathrm{osc,1}}+R_{\mathrm{osc,2}}+R_{\mathrm{osc,3}}+R_{\mathrm{cor}}.\]

\subsection{Estimates of the new stress error}
Appealing to \eqref{def-per-v}, the first linear error is estimated by using Lemma \ref{le-CZ}, Lemma \ref{le-est-g}, Lemma \ref{le-est-a} and Lemma \ref{le-est-psi} 
\begin{equation}\notag
\begin{split}
\|\mathcal \partial_tv_{q+1}^p\|_{L^1_tL^\eta_x}\lesssim &\sum_{k\in\Lambda}\|\mathcal R\nabla\times (\partial_ta_{(k)}g_{(\tau)}W_{(k)}^c)\|_{L^1_tL^\eta_x}+\sum_{k\in\Lambda}\|\mathcal R\nabla\times (a_{(k)}\partial_tg_{(\tau)}W_{(k)}^c)\|_{L^1_tL^\eta_x}\\
&+\sum_{k\in\Lambda}\|\mathcal R\nabla\times (a_{(k)}g_{(\tau)}\partial_tW_{(k)}^c)\|_{L^1_tL^\eta_x}\\
\lesssim&\sum_{k\in\Lambda}\|a_{(k)}\|_{C^1_{t,x}}\|g_{(\tau)}\|_{L^1_t}\|W_{(k)}^c\|_{C_tL_x^\eta}+\sum_{k\in\Lambda}\|a_{(k)}\|_{C_{t,x}}\|\partial_tg_{(\tau)}\|_{L^1_t}\|W_{(k)}^c\|_{C_tL_x^\eta}\\
&+\sum_{k\in\Lambda}\|a_{(k)}\|_{C_{t,x}}\|g_{(\tau)}\|_{L^1_t}\|\partial_tW_{(k)}^c\|_{C_tL_x^\eta}\\
\lesssim&\ \tau^{-\frac12}r^{\frac2\eta-1}\ell^{\frac1\eta-\frac12}\lambda_{q+1}^{-2}+\sigma\tau^{\frac12}r^{\frac2\eta-1}\ell^{\frac1\eta-\frac12}\lambda_{q+1}^{-2}+\tau^{-\frac12}r^{\frac2\eta-1}\ell^{\frac1\eta-\frac12}\lambda_{q+1}^{-2}\frac{\lambda_{q+1}r\mu}{\ell}\\
\lesssim&\ \tau^{-\frac12}r^{\frac2\eta-1}\ell^{\frac1\eta-\frac12}\lambda_{q+1}^{-2}\left(1+\sigma\tau+ \frac{\lambda_{q+1}r\mu}{\ell}\right).
\end{split}
\end{equation}
Similarly we have, by using the definitions of $v_{q+1}^p$, $v_{q+1}^c$ and $v_{q+1}^t$
\begin{equation}\notag
\|\mathcal R(-\Delta)^\alpha v_{q+1}^p\|_{L^1_tL^\eta_x}\lesssim \||\nabla|^{2\alpha-1} v_{q+1}^p\|_{L^1_tL^\eta_x}\lesssim \tau^{-\frac12}r^{\frac2\eta-1}\ell^{\frac1\eta-\frac12}\lambda_{q+1}^{2\alpha-2},
\end{equation}
\begin{equation}\notag
\|\mathcal R(-\Delta)^\alpha v_{q+1}^c\|_{L^1_tL^\eta_x}\lesssim \||\nabla|^{2\alpha-1} v_{q+1}^c\|_{L^1_tL^\eta_x}\lesssim \sigma^{-1},
\end{equation}
\begin{equation}\notag
\|\mathcal R(-\Delta)^\alpha v_{q+1}^t\|_{L^1_tL^\eta_x}\lesssim \||\nabla|^{2\alpha-1} v_{q+1}^t\|_{L^1_tL^\eta_x}\lesssim \mu^{-1}r^{\frac2\eta-2}\ell^{\frac1\eta-1}\lambda_{q+1}^{2\alpha-1}.
\end{equation}
Applying Lemma \ref{le-CZ}, the inductive assumption \eqref{induct-B} and Lemma \ref{le-est-wv} we obtain
\begin{equation}\notag
\begin{split}
&\|\mathcal R\mathbb P_H\div (B_q\otimes w_{q+1}+w_{q+1}\otimes B_q)\|_{L^1_tL^\eta_x}\\
\lesssim& \|B_q\otimes w_{q+1}+w_{q+1}\otimes B_q\|_{L^1_tL^\eta_x}\\
\lesssim&\|B_q\|_{L^\infty_{t,x}}\|w_{q+1}\|_{L^1_tL^\eta_x}\\
\lesssim&\ \lambda_q^5 \left(\tau^{-\frac12}r^{\frac2\eta-1}\ell^{\frac1\eta-\frac12}+\tau^{-\frac12}r^{\frac{2}{\eta}}\ell^{\frac1\eta-\frac32}+\lambda_{q+1}\mu^{-1}r^{\frac2\eta-2}\ell^{\frac1\eta-1}\right).
\end{split}
\end{equation}
Applying Lemma \ref{le-est-g} and Lemma \ref{le-est-a}, the oscillation errors are estimated as
\begin{equation}\notag
\begin{split}
\|R_{\mathrm{osc,1}}\|_{L^1_tL^\eta_x}\lesssim&\sum_{k\in\Lambda}\|g_{(\tau)}^2\|_{L^1_t}\||\nabla|^{-1}\mathbb P_{\neq 0}\mathbb P_{\geq \lambda_{q+1}r/2}\left((W_{(k)}\otimes W_{(k)})\nabla a_{(k)}^2 \right)\|_{C_tL^\eta_x}\\
\lesssim&\sum_{k\in\Lambda}\|g_{(\tau)}\|_{L^2_t}^2\lambda_{q+1}^{-1}r^{-1}\|\nabla a_{(k)}\|_{C_{t,x}}\|\phi_{(k)}\|_{L^1_tL^{2\eta}_x}^2\|\psi_{(k)}\|_{L^1_tL^{2\eta}_x}^2\\
\lesssim&\ \lambda_{q+1}^{-1}r^{-1}r^{\frac2\eta-2}\ell^{\frac1\eta-1}\\
\lesssim&\ \lambda_{q+1}^{-1}r^{\frac2\eta-3}\ell^{\frac1\eta-1},
\end{split}
\end{equation}
\begin{equation}\notag
\begin{split}
\|R_{\mathrm{osc,2}}\|_{L^1_tL^\eta_x}\lesssim&\ \sigma^{-1}\sum_{k\in\Lambda}\|h_{(\tau)}\|_{C_t}\left( \|a_{(k)}\|_{C_{t,x}}\|\nabla a_{(k)}\|_{C^1_{t,x}}+\| a_{(k)}\|_{C^1_{t,x}}^2\right)\\
\lesssim&\ \sigma^{-1},
\end{split}
\end{equation}
and
\begin{equation}\notag
\begin{split}
\|R_{\mathrm{osc,3}}\|_{L^1_tL^\eta_x}\lesssim&\ \mu^{-1}\sum_{k\in\Lambda} \|\mathcal R\mathbb P_{H}\mathbb P_{\neq 0}\left( \partial_t(a_{(k)}^2g_{(\tau)}^2)\psi_{(k)}^2\phi_{(k)}^2k\right)\|_{L^1_tL^\eta_x}\\
\lesssim&\ \mu^{-1}\sum_{k\in\Lambda} \| \partial_t(a_{(k)}^2g_{(\tau)}^2)\|_{L^1_tC_x}\|\psi_{(k)}^2\phi_{(k)}^2\|_{C_tL^\eta_x}\\
\lesssim&\ \mu^{-1}\sum_{k\in\Lambda}\left(\| \partial_ta_{(k)}^2\|_{C_{t,x}}\|g_{(\tau)}^2\|_{L^1_t} +\|a_{(k)}^2\|_{C_{t,x}}\|\partial_tg_{(\tau)}^2\|_{L^1_t}\right) \|\phi_{(k)}^2\|_{C_tL^{\eta}_x}\|\psi_{(k)}^2\|_{C_tL^{\eta}_x}\\
\lesssim&\ \mu^{-1}(1+\sigma\tau)r^{\frac2\eta-2}\ell^{\frac1{\eta}-1}
\end{split}
\end{equation}
where we used the fact $\phi$ and $\psi$ depend on different components of $x$. 

In the end, applying Lemma \ref{le-est-wv} we have the estimate for the corrector error,
\begin{equation}\notag
\begin{split}
\|R_{\mathrm{cor}}\|_{L^1_tL^\eta_x}\lesssim & \|(w_{q+1}^c+w_{q+1}^t)\otimes w_{q+1}+w_{q+1}^p\otimes (w_{q+1}^c+w_{q+1}^t)\|_{L^1_tL^\eta_x}\\
\lesssim& \left(\|w_{q+1}^c\|_{L^2_tL^{\frac{2\eta}{2-\eta}}_x}+\|w_{q+1}^t\|_{L^2_tL^{\frac{2\eta}{2-\eta}}_x} \right)\left(\|w_{q+1}^p\|_{L^2_tL^2_x} +\|w_{q+1}\|_{L^2_tL^2_x}\right)\\
\lesssim&\left(r^{\frac2\eta-1}\ell^{\frac1\eta-2}+ \lambda_{q+1}\mu^{-1}r^{\frac2\eta-3}\ell^{\frac1\eta-\frac32}\tau^{\frac12}\right)\delta_{q+1}^{\frac12}.
\end{split}
\end{equation}

\subsection{Choice of parameters}
It is time to verify \eqref{induct-R} and \eqref{iter-1}-\eqref{iter-3}, i.e.
\[\|R_{q+1}\|_{L^1_tL^\eta_x}\leq \delta_{q+2},   \ \ \|w_{q+1}\|_{L^2_{t,x}}\leq \delta_{q+1}^{\frac12}, \ \ \|w_{q+1}\|_{L^1_t L^2_x}\leq \delta_{q+1}^{\frac12}, \ \ \|w_{q+1}\|_{L^\gamma_t W^{1,p}_x}\leq \delta_{q+2}^{\frac12}. \]
Thus we require
\begin{equation}\notag
\begin{split}
C\tau^{-\frac12}r^{\frac2\eta-1}\ell^{\frac1\eta-\frac12}\lambda_{q+1}^{-2}\left(1+\sigma\tau+ \frac{\lambda_{q+1}r\mu}{\ell}\right)\leq&\ \delta_{q+2},\\
C\tau^{-\frac12}r^{\frac2\eta-1}\ell^{\frac1\eta-\frac12}\lambda_{q+1}^{2\alpha-2}\leq&\ \delta_{q+2},\\
C\sigma^{-1}\leq &\ \delta_{q+2},\\
C\mu^{-1}r^{\frac2\eta-2}\ell^{\frac1\eta-1}\lambda_{q+1}^{2\alpha-1}\leq&\ \delta_{q+2},\\
C\lambda_q^5 \left(\tau^{-\frac12}r^{\frac2\eta-1}\ell^{\frac1\eta-\frac12}+\tau^{-\frac12}r^{\frac{2}{\eta}}\ell^{\frac1\eta-\frac32}+\lambda_{q+1}\mu^{-1}r^{\frac2\eta-2}\ell^{\frac1\eta-1}\right)\leq&\ \delta_{q+2},\\
C\left(\lambda_{q+1}^{-1}r^{\frac2\eta-3}\ell^{\frac1\eta-1}+\mu^{-1}(1+\sigma\tau)r^{\frac2\eta-2}\ell^{\frac1{\eta}-1} \right)\leq&\ \delta_{q+2},\\
C\left(r^{\frac2\eta-1}\ell^{\frac1\eta-2}+ \lambda_{q+1}\mu^{-1}r^{\frac2\eta-3}\ell^{\frac1\eta-\frac32}\tau^{\frac12}\right)\delta_{q+1}^{\frac12}\leq&\ \delta_{q+2},\\
C\left(r\ell^{-1}+\mu^{-1}\lambda_{q+1}r^{-1}\ell^{-\frac12}\tau^{\frac12}\right)\leq &\ \delta_{q+1}^{\frac12},\\
C\left(\tau^{-\frac12}+ r\ell^{-1}\tau^{-\frac12}+\mu^{-1}\lambda_{q+1} r^{-1}\ell^{-\frac12}\right)\leq &\ \delta_{q+1}^{\frac12},\\
C\left(\lambda_{q+1} r^{\frac2p-1}\ell^{\frac1p-\frac12}\tau^{\frac12-\frac1\gamma}+\lambda_{q+1} r^{\frac2p}\ell^{\frac1p-\frac32}\tau^{\frac12-\frac1\gamma}+\mu^{-1}\lambda_{q+1}^2 r^{\frac2p-2}\ell^{\frac1p-1}\tau^{1-\frac1\gamma} \right)\leq &\ \delta_{q+2}^{\frac12}
\end{split}
\end{equation}
for some constant $C>0$. Recall
\[r=\lambda_{q+1}^{n_1}, \ \ \ell=\lambda_{q+1}^{n_2}, \ \ \mu=\lambda_{q+1}^{n_3},  \ \ \tau=\lambda_{q+1}^{n_4}, \ \ \sigma=\lambda_{q+1}^{2\ve}, \ \ \delta_{q+2}=\lambda_{q+1}^{-2b\beta}\]
for some constants $n_1<n_2<0$ and $n_3,n_4>0$.
Thus the conditions above will be satisfied provided 
\begin{equation}\label{para-2}
\begin{split}
(2/\eta-1)n_1+(1/\eta-\frac12)n_2+\frac12n_4+2\ve-2<&-2b\beta,\\
2/\eta n_1+(1/\eta-\frac32)n_2+n_3-\frac12n_4-1<&-2b\beta,\\
(2/\eta-1)n_1+(1/\eta-\frac12)n_2-\frac12n_4+2\alpha-2<&-2b\beta,\\
(2/\eta-2)n_1+(1/\eta-1)n_2-n_3+2\alpha-1<&-2b\beta,\\
(2/\eta-1)n_1+(1/\eta-\frac12)n_2-\frac12n_4+5/b<&-2b\beta,\\
(2/\eta-2)n_1+(1/\eta-1)n_2-n_3+1+5/b<&-2b\beta,\\
(2/\eta-3)n_1+(1/\eta-1)n_2-1<&-2b\beta,\\
(2/\eta-2)n_1+(1/\eta-1)n_2-n_3+n_4+2\ve<&-2b\beta,\\
(2/\eta-1)n_1+(1/\eta-2)n_2-\beta<&-2b\beta,\\
(2/\eta-3)n_1+(1/\eta-\frac32)n_2-n_3+\frac12n_4+1-\beta<&-2b\beta,\\
-2\ve <&-2b\beta,\\
\end{split}
\end{equation}
\begin{equation}\label{para-3}
\begin{split}
n_1-n_2<&-\beta,\\
-n_1-\frac12n_2-n_3+\frac12n_4+1<&-\beta,\\
-\frac12n_4<-\beta, \ \ n_1-n_2-\frac12n_4<-\beta, \ \ -n_1-\frac12n_2-n_3+1<&-\beta,
\end{split}
\end{equation}
\begin{equation}\label{para-4}
\begin{split}
(\frac2p-1)n_1+(\frac1p-\frac12)n_2+(\frac12-\frac1\gamma)n_4+1<&-b\beta,\\
(\frac2p-2)n_1+(\frac1p-1)n_2-n_3+(1-\frac1\gamma)n_4+2<&-b\beta.
\end{split}
\end{equation}
Since $\eta$ can be chosen as close as to $1$, we take $\eta=1$ in the first set of conditions (\ref{para-2}) for brevity and obtain for $\ve>2b\beta$
\begin{equation}\label{para-5}
\begin{split}
n_1+\frac12n_2+\frac12n_4+2\ve-2<&-2b\beta,\\
2 n_1-\frac12n_2+n_3-\frac12n_4-1<&-2b\beta,\\
n_1+\frac12n_2-\frac12n_4+2\alpha-2<&-2b\beta,\\
-n_3+2\alpha-1<&-2b\beta,\\
n_1+\frac12n_2-\frac12n_4+5/b<&-2b\beta,\\
-n_3+1+5/b<&-2b\beta,\\
n_1-1<&-2b\beta,\\
-n_3+n_4+2\ve<&-2b\beta,\\
n_1-n_2-\beta<&-2b\beta,\\
-n_1-\frac12n_2-n_3+\frac12n_4+1-\beta<&-2b\beta.
\end{split}
\end{equation}
Analyzing the inequalities in (\ref{para-3}) and (\ref{para-5}), we first choose 
\begin{equation}\label{para-6}
n_1=-1+2\ve, \ \ n_2=-1+4\ve.
\end{equation}
Note that by such choice, we have from Lemma \ref{le-est-psi}
\[\|W_{(k)}\|_{C_tL_x^p}\lesssim r^{\frac2p-1}\ell^{\frac1p-\frac12}\lesssim \lambda_{q+1}^{(3-8\ve)(\frac12-\frac1p)} \]
which indicates almost full spatial concentration (extreme intermittency). 

With choice \eqref{para-6}, taking $\gamma=\infty$ in (\ref{para-4}) gives
\begin{equation}\label{cond-p}
p<\frac{3-8\ve}{\frac52+\frac12n_4+b\beta-4\ve}, \ \ \ p<\frac{3-8\ve}{5-n_3+n_4+b\beta-8\ve}.
\end{equation}
In view of \eqref{cond-p}, to maximize $p$, we need to choose $n_3$ as large as possible and $n_4$ as small as possible. Since $\mu=\lambda_{q+1}^{n_3}$ and $\tau=\lambda_{q+1}^{n_4}$ are respectively the temporal oscillation and concentration parameters, the above observation says that the constructed solutions are highly oscillatory in time with minimal temporal concentration. In the case of $1\leq \alpha<\frac74$, we take 
\begin{equation}\label{para-7}
n_3=\frac52+2\ve, \ \ n_4=10\ve. 
\end{equation}
One can verify that with $n_1,n_2,n_3$ and $n_4$ chosen in \eqref{para-6} and \eqref{para-7}, all the conditions in \eqref{para-5} are satisfied. It then follows from \eqref{cond-p} that $p<\frac65$. While in the case $\alpha\in[\frac74, 3-6\ve+b\beta-\beta)$ we choose
\begin{equation}\label{para-8}
n_3=2\alpha-1+2\ve, \ \ n_4=4\alpha-7+10\ve
\end{equation}
which together with \eqref{para-6} makes the inequalities of \eqref{para-5} valid. Again, it follows from \eqref{cond-p} that
\[p<\frac{3-8\ve}{2\alpha-1+\ve+b\beta}\]
which indicates for sufficiently small $\ve>2b\beta>\beta>0$ that $p<\frac{3}{2\alpha-1}$. 

The inductive estimates \eqref{induct-A}-\eqref{induct-R} and the rest of the conclusions of Proposition \ref{prop} can be obtained in an analogous way as in the end of Section \ref{sec-case-1}.

\begin{rem}\label{rem-case2}
Again, if we consider the electron MHD \eqref{emhd} without resistivity $(-\Delta)^\alpha B$, the conditions 
\begin{equation}\notag
\begin{split}
n_1+\frac12n_2-\frac12n_4+2\alpha-2<&-2b\beta,\\
-n_3+2\alpha-1<&-2b\beta,\\
\end{split}
\end{equation}
from \eqref{para-5} should be removed. In this case, the parameter choices in \eqref{para-6} and \eqref{para-7} give the optimal value of $p$ satisfying \eqref{cond-p}: $p<\frac65$.
\end{rem}


\bigskip

\appendix

\section{Technical Lemmas}

We recall the geometric lemma introduced by Nash \cite{Nash}.
\begin{lem} \label{le-geo}
Let $B_{\frac12}(\mathrm{Id})$ be the ball of radius $\frac12$ centered at the identity in the space of $3\times 3$ symmetric matrices. There exists a finite set $\Lambda\subset \mathbb S^2\cap \mathbb Q^3$ consisting of vectors $k$ with associated orthonormal bases $\{k,k_1,k_2\}$ and smooth functions $\gamma_{(k)}: B_{\frac12}(\mathrm{Id})\to \mathbb R$ such that
\[R=\sum_{k\in\Lambda}\gamma_{(k)}^2(R)k\otimes k, \ \ \ R\in B_{\frac12}(\mathrm{Id}). \]
\end{lem}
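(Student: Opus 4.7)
The plan is to decompose every $R\in \overline{B_{1/2}(\mathrm{Id})}$ as a strictly positive combination $R=\sum_{k\in\Lambda}\gamma_{(k)}^2(R)\,k\otimes k$ with $k$ ranging over a fixed finite set of rational unit vectors and each $\gamma_{(k)}^2$ smooth and bounded below by a positive constant; the smooth positive square root then furnishes $\gamma_{(k)}$.

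First I would construct a rational spanning set together with a strictly positive decomposition of the identity. The rank-one symmetric tensors $\{k\otimes k : k\in \mathbb{S}^2\cap\mathbb{Q}^3\}$ span the six-dimensional space $\mathrm{Sym}(3,\mathbb{R})$, because $\mathbb{S}^2\cap \mathbb{Q}^3$ is dense in $\mathbb{S}^2$ (e.g.\ via Pythagorean-triple normalizations) and $k\mapsto k\otimes k$ is continuous. Pick a finite $\Lambda_0\subset \mathbb{S}^2\cap\mathbb{Q}^3$ whose associated tensors span $\mathrm{Sym}(3)$; by a symmetrization argument and, if necessary, enlarging $\Lambda_0$, one can write $\mathrm{Id}=\sum_{k\in\Lambda_0}\alpha_k\, k\otimes k$ with all $\alpha_k>0$, since $\mathrm{Id}$ lies in the interior of the convex cone generated by rank-one positive semidefinite tensors.

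Next I would solve the decomposition linearly near $\mathrm{Id}$. Define the linear surjection $\Phi:\mathbb{R}^{|\Lambda_0|}\to \mathrm{Sym}(3)$, $\Phi(x)=\sum_k x_k\, k\otimes k$, choose a linear right inverse $\Psi$ normalized so that $\Psi(\mathrm{Id})=(\alpha_k)_{k\in\Lambda_0}$, and set $\beta_k(R):=\Psi(R)_k$. This map is smooth (in fact affine), satisfies $\sum_k \beta_k(R)\,k\otimes k=R$ and $\beta_k(\mathrm{Id})=\alpha_k>0$, so $\beta_k(R)>0$ on some open neighborhood $U_{\mathrm{Id}}$ of $\mathrm{Id}$, where $\gamma_{(k)}(R):=\sqrt{\beta_k(R)}$ is smooth.

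The main obstacle is extending this local construction to all of $B_{1/2}(\mathrm{Id})$, since $U_{\mathrm{Id}}$ need not cover it. I would handle this by compactness and a partition of unity. Every $R_0\in \overline{B_{1/2}(\mathrm{Id})}$ is positive definite (in operator norm its eigenvalues lie in $[1/2,3/2]$), so the same local construction applies at $R_0$: spectrally decompose $R_0$, approximate the eigenvectors by rational unit vectors, enlarge to a spanning set, and obtain a finite rational $\Lambda_{R_0}$ and smooth positive $\beta^{(R_0)}_k(R)$ on a neighborhood $V_{R_0}$. Finitely many $V_{R_j}$ cover $\overline{B_{1/2}(\mathrm{Id})}$; set $\Lambda=\bigcup_j \Lambda_{R_j}$, extend each $\beta^{(R_j)}_k$ by $0$ outside $\Lambda_{R_j}$, and pick a smooth partition of unity $\{\chi_j\}$ subordinate to $\{V_{R_j}\}$. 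To ensure \emph{strict} positivity of each coefficient (so that the square root is genuinely smooth and not merely continuous), I would include $\Lambda_0$ in every $\Lambda_{R_j}$ and add a uniform baseline by splitting $R=\epsilon\,\mathrm{Id}+(R-\epsilon\,\mathrm{Id})=\epsilon\sum_{k\in\Lambda_0}\alpha_k\,k\otimes k+\sum_k \tilde\beta^{(R_j)}_k(R)\,k\otimes k$ with $\epsilon>0$ chosen small enough that $R-\epsilon\,\mathrm{Id}$ still admits a positive decomposition on each $V_{R_j}$. Defining
\[
\gamma_{(k)}^2(R):=\epsilon\,\alpha_k\,\mathbf 1_{k\in\Lambda_0}+\sum_j \chi_j(R)\,\tilde\beta^{(R_j)}_k(R)
\]
then gives smooth, strictly positive functions on $B_{1/2}(\mathrm{Id})$ with $\sum_{k\in\Lambda}\gamma_{(k)}^2(R)\,k\otimes k=R$, and taking positive square roots yields the desired smooth $\gamma_{(k)}$.
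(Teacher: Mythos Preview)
The paper does not prove this lemma at all; it simply records it in the appendix as ``the geometric lemma introduced by Nash'' and cites \cite{Nash}. So there is no proof in the paper to compare against, and your outline stands on its own.

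Your strategy is the standard one and is essentially correct: realize $\mathrm{Id}$ as a strictly positive combination of $k\otimes k$ over a finite rational set spanning $\mathrm{Sym}(3)$, use an affine right inverse of $\Phi$ to get smooth positive coefficients on a neighborhood of $\mathrm{Id}$, and then globalize over the compact set $\overline{B_{1/2}(\mathrm{Id})}$ by compactness and a partition of unity. The local step at a general $R_0$ works because $R_0$ is positive definite with eigenvalues in $[1/2,3/2]$, so after peeling off a small multiple of $\mathrm{Id}$ (which you already know how to decompose) the remainder is still positive definite and a rational approximation of its spectral decomposition, corrected through $\Lambda_0$, gives a strictly positive decomposition of $R_0$.

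There is one genuine, though easily repaired, gap in your final formula. Your baseline $\epsilon\,\alpha_k\mathbf 1_{k\in\Lambda_0}$ only guarantees strict positivity of $\gamma_{(k)}^2$ for $k\in\Lambda_0$. For $k\in\Lambda\setminus\Lambda_0$ one has $\gamma_{(k)}^2(R)=\sum_j\chi_j(R)\tilde\beta_k^{(R_j)}(R)$, and at a point $R$ where every $\chi_j$ with $k\in\Lambda_{R_j}$ vanishes this sum is zero; then $\sqrt{\gamma_{(k)}^2}$ need not be smooth. The fix is to take the baseline over \emph{all} of $\Lambda$ rather than only $\Lambda_0$: since $\Lambda_0\subset\Lambda$ and $\mathrm{Id}\in\mathrm{int}(C_{\Lambda_0})\subset\mathrm{int}(C_\Lambda)$, for small $\delta>0$ the matrix $\mathrm{Id}-\delta\sum_{k\in\Lambda}k\otimes k$ still lies in $C_\Lambda$, giving $\mathrm{Id}=\sum_{k\in\Lambda}\tilde\alpha_k\,k\otimes k$ with every $\tilde\alpha_k>0$. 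Using this in place of your $\Lambda_0$-baseline makes every $\gamma_{(k)}^2\ge\epsilon\tilde\alpha_k>0$ on $B_{1/2}(\mathrm{Id})$, and the smooth square roots exist. With this correction your argument is complete.
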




The following $L^p$ boundedness of Calder\'on-Zygmund operators will be used for estimates involving the inverse divergence operator $\mathcal R$.
\begin{lem}\label{le-CZ}
Calder\'on-Zygmund operators are bounded on the space $L^p$ of zero-mean functions for $p\in(1,\infty)$.
\end{lem}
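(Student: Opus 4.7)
The plan is to reduce the claim to the classical Calderón--Zygmund theory on $\mathbb{R}^3$ and then transfer the bounds to the torus via Fourier series. First I would recall that a Calderón--Zygmund operator $T$ in this context is a Fourier multiplier whose symbol $m(\xi)$ is homogeneous of degree $0$ and smooth on $\mathbb{S}^2$ (the inverse divergence $\mathcal R$ is of order $-1$ but composed with one derivative gives exactly such an operator). On $\mathbb{R}^3$ the boundedness of $T$ on $L^p(\mathbb{R}^3)$ for $p\in(1,\infty)$ is classical: Plancherel gives $L^2$ boundedness from $\|m\|_{L^\infty}<\infty$, and a Calderón--Zygmund decomposition combined with the standard kernel estimates yields the weak-type $(1,1)$ bound, from which Marcinkiewicz interpolation and duality produce the full range $1<p<\infty$.

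Next I would pass from $\mathbb{R}^3$ to $\mathbb{T}^3$. Two natural routes are available. One is to view $T$ on $\mathbb{T}^3$ as the operator with symbol $m$ restricted to $\mathbb{Z}^3\setminus\{0\}$; a standard transference argument (for instance, de~Leeuw's theorem or an explicit periodization of the kernel) shows that this periodic multiplier is bounded on $L^p(\mathbb{T}^3)$ with the same range of $p$. The alternative is to work directly on the torus: write $Tf(x)=\sum_{\xi\in\mathbb{Z}^3\setminus\{0\}} m(\xi)\widehat{f}(\xi)e^{i\xi\cdot x}$, verify that the periodic kernel $K(x)=\sum_{\xi\neq 0} m(\xi)e^{i\xi\cdot x}$ satisfies the Hörmander cancellation and size conditions away from the diagonal (with the zero mode excluded, there is no divergence at $|\xi|\to 0$), and then repeat the Calderón--Zygmund argument in the periodic setting.

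The role of the zero-mean assumption is to make the operator well-defined: since $m$ may be undefined or irrelevant at $\xi=0$, restricting to $\int_{\mathbb{T}^3}f\,dx=0$ simply removes the zero Fourier mode and leaves us with an honest multiplier on $L^p_0(\mathbb{T}^3)$. This is automatic in all our applications because we apply $\mathcal R$ to functions that are either divergences or are explicitly frequency-localized away from $0$ by $\mathbb{P}_{\neq 0}$. There is no real obstacle here; the only mild care needed is in the transference step to ensure that the constants in the $L^p$ bound on $\mathbb{T}^3$ are independent of the spatial scale, which follows from dilation invariance of the symbol. Since the statement is entirely standard, I would just cite a textbook reference (e.g.\ Stein's \emph{Singular Integrals} or Grafakos's \emph{Classical Fourier Analysis}) rather than reproduce the full argument.
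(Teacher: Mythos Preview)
Your outline is correct and entirely standard. In fact the paper does not prove this lemma at all; it is simply stated in the appendix as a background fact with no argument or reference given. Your suggestion at the end to just cite a textbook (Stein or Grafakos) is exactly the right move and already goes slightly beyond what the paper does.
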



\bigskip

\end{document}